\documentclass{amsart}
\usepackage[utf8]{inputenc}
\usepackage{amsfonts,amssymb,amscd,amsmath,enumerate,verbatim,calc}
\usepackage{xy}
\usepackage{mathtools}

\newcommand{\CM}{Cohen-Macaulay}

\newcommand{\m}{\mathfrak{m} }

\newcommand{\rt}{\rightarrow}

\newcommand{\sub}{\subseteq}

\newcommand{\depth}{\operatorname{depth}}

\theoremstyle{plain}

\newtheorem{theorem}{Theorem}[section]
\newtheorem{corollary}[theorem]{Corollary}
\newtheorem{lemma}[theorem]{Lemma}
\newtheorem{proposition}[theorem]{Proposition}

\theoremstyle{definition}
\newtheorem{definition}[theorem]{Definition}

\newtheorem{remark}[theorem]{Remark}

\theoremstyle{remark}

\begin{document}
	\title[Associated graded modules]{On associated graded modules of maximal Cohen-Macaulay modules over hypersurface rings-II }
	\author{Ankit Mishra}
	\email{ankitmishra@math.iitb.ac.in}
	
	\author{ Tony~J.~Puthenpurakal}
	\email{tputhen@math.iitb.ac.in}
	
	\address{Department of Mathematics, IIT Bombay, Powai, Mumbai 400 076}

	\date{\today}
	\subjclass{Primary 13A30; Secondary  13D40, 13C15,13H10}
	
	\keywords{maximal Cohen-Macaulay module, reduction number, Ratliff-Rush filtration, associated graded module, hypersurface ring}
	
	\begin{abstract}
		If $(A,\mathfrak{m})$ is a  hypersurface ring of dimension $d$ with  $e(A)=3$. Let $M$ be an MCM $A$-module with $\mu(M)=4$ then we  prove that  $\depth{G(M)}\geq d-3$.
		\end{abstract}
	\maketitle
	\section{Introduction}
	
	Let $(A,\mathfrak{m})$ be Noetherian local ring of dimension $d$ and $M$ a finite \CM\ $A$-module of dimension $r$. Let $G(A)=\bigoplus_{n\geq0}\mathfrak{m}^n/\mathfrak{m}^{n+1}$ be associated graded ring of $A$ with respect to $\mathfrak{m}$ and $G(M)=\bigoplus_{n\geq0}{\mathfrak{m}^nM}/{\mathfrak{m}^{n+1}M}$ be associated graded module of $M$ with respect to $\mathfrak{m}$. Now $\mathcal{M}=\bigoplus_{n\geq1}\mathfrak{m}^n/\mathfrak{m}^{n+1}$ is irrelevant maximal ideal of $G(A)$ we set  $\depth{G(M)}$ = grade$(\mathcal{M},G(M))$. If $L$ is an $A$-module then minimal number of generators of $L$ is denoted by  $\mu(L)$ and its length is denoted by $\ell(L)$.
	
	The Hilbert-Samuel function of $M$ with respect to $\mathfrak{m}$ is
	$$H^1(M,n)=\ell({M}/{\mathfrak{m}^{n+1}M})\ \text{for all}\ n\geq0.$$
	There exists a polynomial $P_M(z)$ of degree $r$  such that
	  $$H^1(M,n)=P_M(n)\ \text{for}\ n\gg0.$$
	This polynomial can be written as $$P_M(X)=\sum_{i=0}^{r}(-1)^ie_i(M)\binom{X+r-i}{r-i}$$
	These coefficients $e_i(M)'$s are integers and known as {\it Hilbert coefficients} of $M$. Note that $e_0(M)$ is known as the {\it multiplicity} of $M$ and we denote it as $e(M)$.
	
	We know that Hilbert series of $M$ is formal power series $$H_M(z)=\sum_{n\geq0}\ell(\mathfrak{m}^nM/\mathfrak{m}^{n+1}M)z^n$$
	We can write $$ H_M(z)=\frac{h_M(z)}{(1-z)^r},\ \text{where}\ r=dimM $$
	
	Here, $h_M(z)=h_0(M)+h_1(M)z+\ldots+h_s(M)z^s\in \mathbb{Z}[z]$ and $h_M(1)\neq0$. This polynomial is know as {\it h-polynomial} of $M$.

	If we set $f^{(i)}$ to denote $i$th formal derivative of  a polynomial $f$ then it is easy to see that $e_i(M)=h_M^{(i)}(1)/i!$ for $i=0,\ldots,r$.  It is also  convenient to set $e_i(M)=h_M^{(i)}(1)/i!$ for all $i\geq0.$
	
	Now we know that if $(A,\mathfrak{m})$ is \CM\ with red$(A)\leq2$ (for definition see \ref{redno}) then $G(A)$ is \CM\ (see\cite[Theorem 2.1]{S}). 
	
	If $M$ is a \CM \ $A$-module with red$(M)\leq1$ then  $G(M)$ is \CM \ (see \cite[Theorem 16]{Pu0}), but if red$(M)=2$, then $G(M)$ need not be \CM\ (see \cite[Example 3.3]{PuMCM}).
	
	Here  we consider maximal \CM\ (MCM) modules over a \CM\ local ring $(A,\mathfrak{m})$. We know that if $A$ is a regular local ring then $M$ is free, say $M\cong A^s$. This implies $G(M)\cong G(A)^s$ is \CM.
	
	The next case is when $A$ is a hypersurface ring. {\it For convenience in the introduction we assume $A=Q/(f)$ where $(Q,\mathfrak{n})$ is a regular local ring with infinite residue field and $f\in \mathfrak{n}^2$.} 
	
	If $f\in \mathfrak{n}^2\setminus\mathfrak{n}^3$ then $A$ has minimal multiplicity. It follows that any MCM module $M$ over $A$ has minimal multiplicity. So $G(M)$ is \CM.
	
	We are interested in the case when  $f\in \mathfrak{n}^3\setminus\mathfrak{n}^4$. Note in this case red$(A)=2$. So if $M$ is any MCM $A$-module then red$(M)\leq2$. In this case $G(M)$ need not \CM\ (see  \cite[Example 3.3]{PuMCM}).

	Notice if $M$ is an MCM module over $A$ then projdim$_Q(M)=1$. So, $M$ has a minimal presentation over $Q$ 
	$$0\rt Q^{\mu(M)}\xrightarrow{\phi}Q^{\mu(M)}\rt M\rt 0.$$
	We investigate $G(M)$ in terms of invariants of a minimal presentation of $M$ over $Q$.

	For $\mu(M)=2,3$ we have proved that $\depth{G(M)}\geq d-\mu(M)+1$ (see \cite{Mishra}). Here we consider the next case that is the case when $\mu(M)=4$ and prove:

\begin{theorem}\label{3}
	Let $(A,\mathfrak{m})$ be a   hypersurface ring of dimension $d$ with $e(A)=3$. Let  $M$ be an MCM $A$-module. Now if   $\mu(M)=4$, then  $\depth{G(M)}\geq d-3$.
\end{theorem}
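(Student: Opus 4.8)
The plan is to make the standard harmless reductions, isolate the genuinely hard case, and then carry out an explicit analysis of the minimal presentation of $M$ over $Q$.

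First I would reduce to a convenient form of $A$. Replacing $A$ by $A[X]_{\mathfrak{m}A[X]}$ leaves $e(A)$, $\mu(M)$ and $\depth G(M)$ unchanged, so I may assume the residue field is infinite, and completing I may assume $A=Q/(f)$ with $(Q,\mathfrak{n})$ regular local and $f\in\mathfrak{n}^2$. Since $G(A)=G(Q)/(f^{*})$ for the leading form $f^{*}$ of $f$, the hypothesis $e(A)=3$ forces $f$ to have order $3$, so $\red(A)=2$ and $h_A(z)=1+z+z^2$. As $\mathfrak{m}^3=J\mathfrak{m}^2$ for a minimal reduction $J$ of $\mathfrak{m}$, we get $\mathfrak{m}^3M=J\mathfrak{m}^2M$, hence $\red(M)\le 2$ for every MCM module. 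If $\red(M)\le 1$ then $G(M)$ is \CM\ by \cite[Theorem 16]{Pu0} and the bound is immediate, so I may assume $\red(M)=2$. Finally $\depth G(M)\ge 0$ always, so the statement is vacuous for $d\le 3$: the content lies in $d\ge 4$, where the goal is to produce a $G(M)$-regular sequence of length $d-3$, equivalently to show $H^i_{\mathcal{M}}(G(M))=0$ for $i<d-3$.

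I would control $\depth G(M)$ through the module $L(M)=\bigoplus_{n\ge 0}M/\mathfrak{m}^{n+1}M$ over the Rees algebra $\mathcal{R}=\bigoplus_{n\ge 0}\mathfrak{m}^n$, together with its tautological exact sequence
$$0 \longrightarrow G(M) \longrightarrow L(M) \xrightarrow{\ \pi\ } L(M)(-1) \longrightarrow 0,$$
where $\pi$ is the family of natural surjections $M/\mathfrak{m}^{n+1}M\to M/\mathfrak{m}^nM$. Passing to local cohomology, the long exact sequence shows that vanishing of $H^i_{\mathcal{M}}(L(M))$ for $i<d-3$ already yields $H^i_{\mathcal{M}}(G(M))=0$ in the same range. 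These local cohomology modules are compatible with reduction along a general $M$-superficial element $x$: for generic linear $x$, $\bar A=A/xA$ is again a hypersurface of dimension $d-1$ with $e(\bar A)=3$, and $\bar M=M/xM$ is MCM with $\mu(\bar M)=4$. Iterating down to the Artinian base then reduces the problem to a direct computation of the controlling module from $M$ itself — exactly the mechanism used in \cite{Mishra} for $\mu(M)=2,3$ — with the subtlety that the amount of depth lost is measured by the top of the support of $H^\ast_{\mathcal{M}}(L(M))$.

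The heart of the proof is the explicit analysis made possible by $\operatorname{projdim}_Q M=1$: write the minimal presentation $0\to Q^4\xrightarrow{\phi}Q^4\to M\to 0$ with its matrix factorization $\phi\psi=\psi\phi=f\,I_4$. Because $f$ has order $3$ and $\mu(M)=4$, the sixteen entries of $\phi$ have tightly constrained orders — each lies in $\mathfrak{n}$, and the factorization together with minimality forces the leading forms, after invertible row and column operations over $Q$, into one of a finite list of normal forms for the initial matrix. For each normal form I would read off $h_M(z)=4+h_1z+h_2z^2$ (with $h_2>0$ since $\red(M)=2$) and, crucially, compute $H^i_{\mathcal{M}}(L(M))$, checking that it is supported only in cohomological degrees $i\ge d-3$; here the exponent $3=\mu(M)-1$ should emerge as the length of the relevant stretch of the complex governing $L(M)$.

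The main obstacle is precisely this classification-and-computation step. For $\mu(M)=2,3$ the admissible leading matrices form a short list treated case by case in \cite{Mishra}, whereas for $\mu(M)=4$ the combinatorics of the possible orders of the entries — constrained simultaneously by $\phi\psi=f\,I_4$ with $f$ of order $3$, by minimality of the presentation, and by $\red(M)=2$ — is markedly richer, and the real danger is configurations in which a naive superficial reduction drops the depth by more than one. I expect the decisive input to be the rigidity forced by $e(A)=3$: since $h_M$ has degree at most $2$, the module $L(M)$ (equivalently the Sally module of $M$) can fail to be \CM\ only in a bounded way, and the uniform estimate $d-3$ should follow by showing that its deviation is capped by $\mu(M)-1=3$ \emph{uniformly} across the normal forms, rather than by settling each case in isolation.
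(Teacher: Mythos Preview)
Your proposal is a strategy sketch, not a proof, and the gap is exactly where you yourself flag it: the ``classification-and-computation step'' is never carried out. Everything after the harmless reductions is a statement of intent. You assert that the entries of $\phi$ can be brought into ``one of a finite list of normal forms'' and that for each one $H^i_{\mathcal{M}}(L(M))$ vanishes for $i<d-3$, but you neither produce the list nor do a single computation; the closing sentence (``the uniform estimate $d-3$ should follow by showing that its deviation is capped by $\mu(M)-1=3$'') is just a restatement of the theorem in different language. The phrases ``should emerge'', ``I expect'', ``should follow'' are where the mathematics would have to be. In particular, the claim that reduction modulo a superficial element is ``compatible'' with $H^i_{\mathcal{M}}(L(M))$ hides the entire difficulty: depth can genuinely drop under such reductions, and controlling \emph{how much} it drops is the content of the result, not a preliminary.

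The paper proceeds quite differently and far more concretely. It first splits off any free summand (reducing to $\mu(N)\le 3$, handled by \cite{Mishra}), and for $M$ with no free summand it passes along a maximal $\phi$-superficial sequence to the Artinian quotient $M_d$ over a DVR, so that $M_d\cong\bigoplus_{i=1}^4 Q'/(y^{a_i})$ with $1\le a_i\le 2$. This gives five cases $e(M)\in\{4,5,6,7,8\}$, not a classification of matrix normal forms. The extremes $e(M)=4,8$ are immediate (Ulrich, resp.\ $e=\mu\cdot i(M)$), and the case $e(M)=5$ follows from \cite{Mishra}. The substance is in $e(M)=6,7$, handled in dimension $4$ and then lifted by Sally descent: there the argument does not use $L(M)$ or local cohomology at all, but rather explicit length counts via the exact sequences \ref{exact seq}--\ref{exact d}, the polynomial $b_{x,M}(z)$, and---crucially in the hardest subcase of $e(M)=7$---the Ratliff--Rush filtration, showing $\widetilde{\mathfrak m^iM_j}=\mathfrak m^iM_j$ in enough degrees to force $\widetilde{G(M_j)}$ \CM\ and then invoking a generalized \CM\ argument (Proposition~\ref{ASSG} and Proposition~\ref{vector}) to find a superficial element that does not kill depth. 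None of these ingredients appears in your plan, and they are what actually pins down the bound $d-3$.
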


Here is an overview of the contents of this paper. In  section 2, we give some preliminaries which we have used in the paper.  In section 3, we prove Theorem \ref{3}. In the last section some examples, illustrating our results, are given.

\section{Priliminaries}
Let $(A,\mathfrak{m})$ be a Noetherian local ring of dimension $d$, and $M$ an $A$-module of dimension $r$.
\s An element $x\in \mathfrak{m}$ is said to be a superficial element of $M$ if there exists an integer $n_0>0$ such that $$(\mathfrak{m}^nM:_Mx)\cap \mathfrak{m}^{n_0}M=\mathfrak{m}^{n-1}M\ \text{for all}\ n>n_0$$
We know that if residue field $k=A/\mathfrak{m}$ is infinite then superficial elements always exist (see \cite[Pg 7]{Sbook}). A sequence of elements $x_1,\ldots,x_m$ is said to be superficial sequence if $x_1$ is $M$-superficial and $x_i$ is $M/(x_1,\ldots,x_{i-1})M$-superficial for $i=2,\ldots,m.$

\begin{remark}
	
	\begin{enumerate}
		\item 	If $x$ is $M-$superficial and regular then we have $(\mathfrak{m}^nM:_Mx)=\mathfrak{m}^{n-1}M$ for all $n\gg 0.$
		
		\item If $\depth{M}>0$ then it is easy to show that  every $M$-superficial element is also $M-$ regular.
	\end{enumerate}

\end{remark}

\s \label{Base change} Let $f:(A,\mathfrak{m})\rt (B,\mathfrak{n})$ be a flat local  ring homomorphism with $\mathfrak{m}B=\mathfrak{n}$. If $M$ is an $A$-module set $M'=M\otimes_A B$, then  following facts are well known 
\begin{enumerate}
	\item $H(M,n)= H(M',n)$ for all $n\geq0$.
	\item depth$_{G(A)}G(M)=$ depth$_{G(A')}G(M')$.
	\item projdim$_A(M)$ = projdim$_{A'}(M')$.
	
\end{enumerate}
We will use this result in the following two cases:
\begin{enumerate}[(a)]
	\item We can assume $A$ is complete by taking $B=\hat{A}$.
	\item We can assume the residue field of $A$ is infinite, because if the residue field $(k=A/\mathfrak{m})$ is finite we can take $B=A[X]_S$  where $S=A[x]\setminus \mathfrak{m}A[X]$. Clearly, the residue field of $B=k(X)$ is infinite.
\end{enumerate}

\s \label{compl_and_inf} Let $(A,\mathfrak{m})$ be a hypersurface ring. We can assume $A$ is complete (see \ref{Base change}(a)). So $A\cong Q/(f)$, where $(Q,\mathfrak{n})$ is a regular local ring and $f\in \mathfrak{n}^2$. If residue field of $A$ is finite take $B=A[X]_S$  where $S=A[x]\setminus \mathfrak{m}A[X]$. Note that $B$ is a quotient of a regular local ring by a principal ideal and residue field of $B$ is infinite.

 All the properties we deal in this article are invariant when we go from $A$ to $A'$. So we can assume that residue field of $A$ is infinite.

\s If $a$ is a non-zero element of $M$ and if $i$ is the largest integer such that $a\in \mathfrak{m}^iM$, then we denote image of $a$ in $\mathfrak{m}^i \  M/\mathfrak{m}^{i+1} \ M$ by $a^*$. If $N$ is a submodule of $M$, then $N^*$ denotes the graded submodule of $G(M)$ generated by all $b^*$ with $b\in N$.

\begin{definition}
	Let $(A,\mathfrak{m})$ be a Noetherian local ring and $M\ne 0$ be a finite $A$-module then $M$ is said to be a \CM\ $A$-module if  $\depth{M}=$ dim $M$, and a maximal \CM\ (MCM) module if  $\depth{M}=$ dim $A$.
\end{definition}

\s \label{mod-sup} If $x\in \mathfrak{m}\setminus\mathfrak{m}^2$ an $M-$superficial and regular element. Set $N=M/xM$ and $K=\mathfrak{m}/(x)$ then we have {\bf Singh's equality}\index{Singh's equality} ( for $M=A$ see \cite[Theorem 1]{singh}, and for the module case see \cite[Theorem 9]{Pu0})
$$H(M,n)=\ell(N/K^{n+1}N)-\ell\left(\frac{\mathfrak{m}^{n+1}M:x}{\mathfrak{m}^nM}\right)\ \text{for all}\ n\geq0.$$

Set $b_n(x,M)=\ell(\mathfrak{m}^{n+1}M:x/\mathfrak{m}^nM)$ and $b_{x,M}(z)=\sum_{n\geq0}b_n(x,M)z^n$. Notice that $b_0(x,M)=0$. Now we have 
$$h_M(z)=h_N(z)-(1-z)^rb_{x,M}(z)$$

\s \label{Property} (See \cite[Corollary 10]{Pu0}) Let $x\in \mathfrak{m}$ be an $M-$superficial and regular element. Set $B=A/(x)$, $N=M/xM$ and $K=\mathfrak{m}/(x)$ then we have
\begin{enumerate}
	\item dim$M-1$ = dim$N$ and $h_0(N)=h_0(M)$.
	\item $b_{x,M}$ is a polynomial.
	\item $h_1(M)=h_1(N)$ if and only if $\mathfrak{m}^2M\cap xM=x\mathfrak{m}M.$
	\item $e_i(M)=e_i(N)$ for $i=0,\ldots,r-1.$
	\item $e_r(M)=e_r(N)-(-1)^r\sum_{n\geq0}b_n(x,M).$
	\item $x^*$ is $G(M)$-regular if and only if $b_n(x,M)=0$ for all $n\geq0.$
	\item $e_r(M)=e_r(N)$ if and only if $x^*$ is $G(M)$-regular.
	\item  $\depth{G(M)}\geq1$ if and only if $h_{M}(z)=h_N(z)$.
\end{enumerate}
\s \label{Sally-des} {\bf Sally-descent }(see \cite[Theorem 8]{Pu0}): Let $(A,\mathfrak{m})$ be a \CM\ local ring of dimension $d$ and $M$ be \CM\ module of dimension $r$. Let $x_1,\ldots,x_c$ be a $M$-superficial sequence with $c\leq r-1$. Set $M_c=M/(x_1,\ldots,x_c)M$ then   
 $\depth{G(M)}\geq c+1$ if and only if  $\depth{G(M_c)}\geq 1$. 
\s \label{redno} The reduction number of $M$ (denoted as red$(M)$) can be defined as the least integer $\ell$ such that there is an ideal $J$ generated by a maximal superficial sequence with $\mathfrak{m}^{\ell+1}M=J\mathfrak{m}^{\ell}M$.

\begin{definition}\label{Ulrich}
	Let $(A,\mathfrak{m})$ be a Noetherian local ring and $M$ be a maximal \CM \ module  then $M$ is said to be a Ulrich module if $e(M)=\mu(M)$.

\end{definition}
\begin{remark}
	When $M$ is an MCM module and $\mathfrak{m}$ has a minimal reduction $J$ generated by a system of parameters, then $M$ is Ulrich module if and only if $\mathfrak{m}M=JM$. 
\end{remark}

\s  (See \cite[section 6]{heinzer}) For any $n\geq1$ we can define Ratliff-Rush submodule of $M$ associated with $\mathfrak{m}^n$ as 
$$\widetilde{\mathfrak{m}^nM}=\bigcup_{i\geq0}(\mathfrak{m}^{n+i}M:_M\mathfrak{m}^i)$$ The filtration $\{\widetilde{\mathfrak{m}^nM}\}_{n\geq1}$ is known as the Ratliff-Rush filtration \index{Ratliff-Rush filtration} of $M$ with respect to $\mathfrak{m}$.

For the proof of the following properties in the ring case see \cite{Ratliff}. This proof can be easily extended for the modules. Also see \cite[2.2]{Naghipour}.

\s If $\depth{M}>0$ and  $x\in \mathfrak{m}$ is a $M-$superficial element then we have
\begin{enumerate}
	\item $\widetilde{\mathfrak{m}^nM}=\mathfrak{m}^nM$ for all $n\gg0.$
	\item  $(\widetilde{\mathfrak{m}^{n+1}M}:x)=\widetilde{\mathfrak{m}^nM}$ for all $n\geq1.$
\end{enumerate}
\s\label{htilde} Let $\widetilde{G(M)}=\bigoplus_{n\geq0}\widetilde{\mathfrak{m}^nM}/\widetilde{\mathfrak{m}^{n+1}M}$ be the associated graded module  of $M$ with respect to Ratliff-Rush filtration. Then its Hilbert series
$$\sum_{n\geq0}\ell(\widetilde{\mathfrak{m}^nM}/\widetilde{\mathfrak{m}^{n+1}M})z^n=\frac{\widetilde{h_M}(z)}{(1-z)^r}$$
Where $\widetilde{h_M}(z)\in \mathbb{Z}[z]$. Set $r_M(z)=\sum_{n\geq0}\ell(\widetilde{\mathfrak{m}^{n+1}M}/\mathfrak{m}^{n+1}M)z^n$; clearly, $r_M(z)$ is a polynomial with non-negative integer coefficients (because $\depth{M}>0$). Now we have (see \cite[1.5]{Pu2}) $$h_M(z)=\widetilde{h_M}(z)+(1-z)^{r+1}r_M(z);\ \text{where }\ r=\text{dim}M$$
We know that $\depth{G(M)}>0$ if and only if $r_M(z)=0$.

\s (see \cite[2.1]{Pu2}) Let $x$ be an $M-$superficial element and depth$M\geq2$. Set $N=M/xM$, then we have a natural map $\rho^x:M\rt N$ and we say that Ratliff-Rush filtration on $M$ behaves well mod superficial element $x$ if $\rho^x(\widetilde{\mathfrak{m}^nM})=\widetilde{\mathfrak{m}^nN}$ for all $n\geq 1$. Now $\rho^x$ induces the maps $$\rho_n^x:\frac{\widetilde{\mathfrak{m}^nM}}{\mathfrak{m}^nM}\rt \frac{\widetilde{\mathfrak{m}^nN}}{\mathfrak{m}^nN}$$
It is easy to  show that Ratliff-Rush filtration behaves well mod $x$ if and only if $\rho_n^x$ is surjective for all $n\geq 1$.

\begin{definition}
	Let $(A,\mathfrak{m})$ be a Noetherian local ring and $M$ be a finite $A$-module with dim$M=d$. Then we say $G(M)$ is a generalized \CM \ $G(A)$-module if 
	$$\ell(H^i_\mathcal{M}(G(M)))<\infty\ \text{ for } \ i=0,\ldots,d-1$$
	where, $H^i_\mathcal{M}(G(M))$ is the $i$-th local cohomology module of $G(M)$ with respect to the maximal homogeneous ideal $\mathcal{M}$ of $G(A)$. 
\end{definition}
\begin{remark}
	$G(A)$ is a finitely generated $k(=A/\mathfrak{m})$-algebra. A $G(A)$-module $E$ is generalized \CM \ if and only if $E_P$ is \CM\ for all prime ideals $P\ne \mathcal{M}$.
\end{remark}

\begin{proposition}\label{ASSG}
	Let $(A,\mathfrak{m})$ be a  \CM\ local ring of dimension $d\geq 1$ and $M$ a finite $A$-module with dim$M=d$. Now if  $\widetilde{G(M)}$ is a \CM \ $G(A)$-module, then 
	\begin{enumerate}
		\item $G(M)$ is a generalized \CM\ module.
		\item dim $G(A)/P = d$ for all minimal primes $P$ of $G(M)$.
	\end{enumerate}
\begin{proof}
	Similar to the proof of \cite[Proposition 2.20]{Mishra1}.
	\end{proof}	
\end{proposition}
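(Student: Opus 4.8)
The plan is to exploit the fact that the $\mathfrak{m}$-adic filtration and the Ratliff-Rush filtration agree in all large degrees (\ref{htilde}), so that $G(M)$ and $\widetilde{G(M)}$ differ only by finite-length graded $G(A)$-modules, and then to transport the Cohen-Macaulayness of $\widetilde{G(M)}$ to $G(M)$ by a local cohomology comparison. The first step is to build the comparison map. Since $\mathfrak{m}\,\widetilde{\mathfrak{m}^nM}\subseteq\widetilde{\mathfrak{m}^{n+1}M}$ (immediate from the definition of the Ratliff-Rush submodule), the inclusions $\mathfrak{m}^nM\subseteq\widetilde{\mathfrak{m}^nM}$ induce a homomorphism of graded $G(A)$-modules $\psi\colon G(M)\to\widetilde{G(M)}$, given in degree $n$ by $\mathfrak{m}^nM/\mathfrak{m}^{n+1}M\to\widetilde{\mathfrak{m}^nM}/\widetilde{\mathfrak{m}^{n+1}M}$. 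Because $\widetilde{\mathfrak{m}^nM}=\mathfrak{m}^nM$ for $n\gg0$, both $K:=\ker\psi$ and $C:=\operatorname{coker}\psi$ vanish in all large degrees and hence have finite length. I would record the resulting four-term sequence split into $0\to K\to G(M)\to I\to 0$ and $0\to I\to\widetilde{G(M)}\to C\to 0$, where $I:=\operatorname{image}\psi$.

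For part (1) I would feed these two short exact sequences into the long exact sequences for $H^\bullet_{\mathcal{M}}(-)$. Since $K$ and $C$ have finite length, $H^i_{\mathcal{M}}(K)=H^i_{\mathcal{M}}(C)=0$ for $i\geq1$, which forces $H^i_{\mathcal{M}}(G(M))\cong H^i_{\mathcal{M}}(I)\cong H^i_{\mathcal{M}}(\widetilde{G(M)})$ for all $i\geq2$. As $\widetilde{G(M)}$ is \CM\ of dimension $d$, these groups vanish for $i\leq d-1$, so $H^i_{\mathcal{M}}(G(M))=0$ for $2\leq i\leq d-1$. A short diagram chase, using $H^0_{\mathcal{M}}(\widetilde{G(M)})=0$ (again \CM\ with $d\geq1$), then shows that $H^0_{\mathcal{M}}(G(M))$ and $H^1_{\mathcal{M}}(G(M))$ are finite length, being controlled by $K$ and $C$. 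Hence $H^i_{\mathcal{M}}(G(M))$ has finite length for every $i\leq d-1$, i.e. $G(M)$ is generalized \CM.

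For part (2) I would use that finite-length modules are supported only at $\mathcal{M}$, so localizing at any prime $P\neq\mathcal{M}$ annihilates $K$ and $C$ and yields $G(M)_P\cong\widetilde{G(M)}_P$; thus $\operatorname{Supp}G(M)$ and $\operatorname{Supp}\widetilde{G(M)}$ coincide away from $\mathcal{M}$. Since $\widetilde{G(M)}$ is \CM\ and $G(A)$ is a standard graded $k$-algebra, hence a quotient of a polynomial ring over $k$, the module $\widetilde{G(M)}$ is unmixed: every minimal prime $P$ satisfies $\dim G(A)/P=d$, and because $d\geq1$ none of them is $\mathcal{M}$. I would then check that the minimal primes of $G(M)$ agree with those of $\widetilde{G(M)}$: a minimal prime of $G(M)$ cannot equal $\mathcal{M}$ (otherwise $\mathcal{M}$ would be minimal in $\operatorname{Supp}\widetilde{G(M)}$, impossible for an equidimensional support of dimension $d\geq1$), and away from $\mathcal{M}$ the two supports are equal, so minimality transfers in both directions. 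Consequently every minimal prime $P$ of $G(M)$ has $\dim G(A)/P=d$.

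The \emph{main obstacle} I anticipate is the unmixedness step in (2), namely that a Cohen-Macaulay $G(A)$-module has all of its minimal primes of the same top dimension $d$; this requires $G(A)$ to be catenary and well behaved, which is exactly why I would invoke its presentation as a quotient of a polynomial ring over $k$ and cite the standard unmixedness theorem there. By contrast, the homological bookkeeping in (1) is routine once the finite-length comparison sequence built from $\psi$ is available.
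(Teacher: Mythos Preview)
Your argument is correct and is the standard route one would expect in \cite[Proposition~2.20]{Mishra1}, to which the paper simply defers: build the comparison map $\psi\colon G(M)\to\widetilde{G(M)}$ with finite-length kernel and cokernel, push through the long exact sequence in $H^\bullet_{\mathcal{M}}$ for part~(1), and use that localizing away from $\mathcal{M}$ makes $\psi$ an isomorphism together with unmixedness of \CM\ modules over a quotient of a polynomial ring for part~(2). One small remark: the finiteness of $\ker\psi$ and $\operatorname{coker}\psi$ rests on $\widetilde{\mathfrak{m}^nM}=\mathfrak{m}^nM$ for $n\gg0$, which needs $\depth M>0$; this is not stated in the proposition but is the standing convention in the paper's treatment of the Ratliff--Rush filtration (see~\ref{htilde}) and holds in every application made here.
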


\begin{definition}\label{hypersurface}
	Let $(A,\mathfrak{m})$ be a Noetherian local ring, then $A$ is said to be a hypersurface ring if its completion can be written as a quotient of a regular local ring by a principal ideal.
\end{definition}
\s Let $(Q,\mathfrak{n})$ be a regular local ring, $f\in \mathfrak{n}^e\setminus\mathfrak{n}^{e+1}$ and $A=Q/(f)$. If $M $ is an MCM $A-$module then projdim$_Q(M)=1$ and $M$ has a minimal presentation: $$0\rt Q^{\mu(M)}\rt Q^{\mu(M)}\rt M \rt 0$$

\s \label{i(M)}Let $(Q,\mathfrak{n})$ be a regular local ring and $\phi : Q^t\rt Q^t$  a linear map,  set
$$i_\phi=\text{max}\{i |\  \text{all entries of}\ \phi \ \text{are in }\ \mathfrak{n}^i\}$$
If $M $ has minimal presentations: $0\rt Q^t\xrightarrow{\phi}Q^t\rt M\rt0$ and
$0\rt Q^t\xrightarrow{\phi'}Q^t\rt M\rt0$ then it is well known that $i_\phi=i_{\phi'}$ and det$(\phi)=u$det$(\phi')$ where $u$ is a unit. We set $i(M)=i_\phi$ and det$M=$ det$(\phi)$. For any non-zero element $a$ of $Q$ we set $v_Q(a)=max\{i|a\in \mathfrak{n}^i\}$. We are choosing this set-up from \cite{PuMCM}.

\begin{definition} \label{phi}
	(See \cite[Definition 4.4]{PuMCM}) Let $(Q,\mathfrak{n})$ be a regular local ring, $A=Q/(f)$ where $f\in \mathfrak{n}^e\setminus\mathfrak{n}^{e+1}, e\geq2$ and $M$  an MCM $A-$module with minimal presentation: $$0\rt Q^t\xrightarrow{\phi}Q^t\rt M\rt0$$
	Then an element $x$ of $\mathfrak{n}$ is said to be $\phi-$ superficial \index{$\phi-$ superficial}if we have
	\begin{enumerate}
		\item $x$ is $Q\oplus A\oplus M$ superficial.
		\item If $\phi=(\phi_{ij})$ then $v_Q(\phi_{ij})=v_{Q/xQ}(\overline{\phi_{ij}})$.
		\item $v_Q(det(\phi))=v_{Q/xQ}(det(\overline{\phi}))$
	\end{enumerate}
\end{definition}
\begin{remark}

	If $x$ is $Q\oplus A\oplus M\oplus (\oplus_{ij}Q/(\phi_{ij}))\oplus Q/(det(\phi))-$superficial then it is $\phi-$superficial. So if the residue field of $Q$ is infinite then $\phi-$superficial elements always exist.
\end{remark}
\begin{definition}
	(See \cite[Definition 4.5]{PuMCM}) Let $(Q,\mathfrak{n})$ be a regular local ring, $A=Q/(f)$ where $f\in \mathfrak{n}^e\setminus\mathfrak{n}^{e+1}, e\geq2$ and $M$  an MCM $A-$module with minimal presentation: $$0\rt Q^t\xrightarrow{\phi}Q^t\rt M\rt0.$$
	We say that $x_1,\ldots,x_c$ is a $\phi$-superficial sequence if $\overline{x_n}$ is $(\phi \otimes_Q Q/(x_1,\ldots,x_{n-1}))$-superficial for $n=1,\ldots,c$.
\end{definition}

\s \label{d=1} With above set-up we have   
\begin{enumerate}
	\item (\cite[Lemma 4.7]{PuMCM})  If dim$M=1$  then
	$$h_M(z)=\mu(M)(1+z+\ldots+z^{i(M)-1})+\sum_{i\geq i(M)}h_i(M)z^i$$
	$$ \text{with}\ h_i(M)\geq0 \ \forall \ i.$$
	
	\item (\cite[Theorem 2]{PuMCM}) $e(M)\geq \mu(M)i(M)$ and if $e(M)=\mu(M)i(M)$ then 
	
	$G(M)$ is Cohen-Macaulay and $h_M(z)=\mu(M)(1+z+\ldots+z^{i(M)-1})$.
	
\end{enumerate}
\s (\cite[Proposition 13]{Pu0}) \label{rho_formula} Let $(A,\mathfrak{m})$ be a \CM\ local ring and $M$ be a \CM\ module of dimension one. Let $x$ be an $M$-superficial element. Set $\rho_n(M)=\ell(\mathfrak{m}^{n+1}M/x\mathfrak{m}^nM)$ for all $n\geq 0$. If $\deg h_M(z)=s$ then $\rho_n(M)=0$ for all $n\geq s$ and $$h_M(z)=\mu(M)+\sum_{i=1}^{s}(\rho_{i-1}(M)-\rho_{i}(M))z^i.$$

\s \label{exact seq}Let $(A,\mathfrak{m})$ be a \CM\ local ring  and $M$ a \CM\ $A$-module of dimension 2. Let $x,y$ be a maximal $M$-superficial sequence. 

Set $J=(x,y)$ and $\overline{M}=M/xM$ then we have exact sequence (for $M=A$ see \cite[Lemma 2.2]{rv})
\begin{align*}
0 \rt \frac{\mathfrak{m}^nM:J}{\mathfrak{m}^{n-1}M}\xrightarrow{f_1} \frac{\mathfrak{m}^nM:x}{\mathfrak{m}^{n-1}M}  \xrightarrow{f_2} \frac{\mathfrak{m}^{n+1}M:x}{\mathfrak{m}^{n}M}
\xrightarrow{f_3} \frac{\mathfrak{m}^{n+1}M}{J\mathfrak{m}^nM}
\xrightarrow{f_4} \frac{\mathfrak{m}^{n+1}\overline{M}}{y\mathfrak{m}^n\overline{M}}\rt 0
\end{align*}
Here, $f_1$ is inclusion map, $f_2(a+\mathfrak{m}^{n-1}M)=ay+\mathfrak{m}^nM, f_3(b+\mathfrak{m}^nM)=bx+J\mathfrak{m}^nM$ and $f_4$ is reduction modulo $x$.

\s \label{exact d}Let $(A,\mathfrak{m})$ be a \CM\ local ring of dimension $d\geq 1$ and $M$ a maximal \CM\ $A$-module. Let $\underline{x}=x_1,\ldots,x_d$ be a maximal $M$-superficial sequence. Set $N=M/x_1M$, $J=(x_1,\ldots,x_d)$ and $\overline{J}$ is image of $J$ is $A/(x_1)$. Then we have  
$$0 \rt \frac{\mathfrak{m}^{2}M:x_1}{\mathfrak{m}M}
\xrightarrow{f} \frac{\mathfrak{m}^{2}M}{J\mathfrak{m}M}
\xrightarrow{g} \frac{\mathfrak{m}^{2}{N}}{\overline{J}\mathfrak{m}{N}}\rt 0.$$
Here, $f(a+\mathfrak{m}M)=ax_1+J\mathfrak{m}M$ and $g$ is reduction modulo $x_1$.

\s\label{exact1} Let $(A,\mathfrak{m})$ be a \CM\ local ring of dimension one and $M$ a maximal \CM\ $A$-module. Let $x$ be a superficial element of $M$. Set $N=M/xM$. Then we have $$0\rt \frac{\mathfrak{m}^2M:x}{\mathfrak{m}^2M}\xrightarrow{f} \frac{\mathfrak{m}^2M}{x\mathfrak{m}^2M}\xrightarrow{g}\frac{\mathfrak{m}^2N}{0}\rt 0.$$
Here, $f(a+\mathfrak{m}^2M)=ax+x\mathfrak{m}^2M$ and $g$ is reduction modulo $x$.

The following  result is well known, but we will use this many times. For the convenience of the reader we state it

\s \label{overline{G(M)}} Let $(A,\mathfrak{m})=(Q/(f),\mathfrak{n}/(f))$, with $(Q,\mathfrak{n})$  a regular local ring of dimension $d+1$ and  $f\in \mathfrak{n}^i\setminus\mathfrak{n}^{i+1}$. Now if $M$ is a maximal \CM\ $A$-module with red$(M)\leq2$. Let $\underline{x}=x_1,\ldots,x_d$ be sufficiently general linear forms in $\mathfrak{n}/\mathfrak{n}^2$. Set $S=G_{\mathfrak{n}}(Q)$, $R=S/(\underline{x^*})S$ then $R\cong k[T]$ and $G(A)/(\underline{x})G(A)\cong R/(T^s)$ for some $s\geq2$.\\ Now consider $\overline{G(M)}=G(M)/(\underline{x^*})G(M)$. Then
$$\overline{G(M)}=M/\mathfrak{m}M \oplus \mathfrak{m}M/(\mathfrak{m}^2M+(\underline{x})M) \oplus \mathfrak{m}^2M/(\mathfrak{m}^3M+(\underline{x})\mathfrak{m}M)$$
Its Hilbert series is $\mu(M)+\alpha z+\beta z^2$ where $\beta\leq \alpha\leq \mu(M)$, because it is an $R$-module which is also $R/(T^s)$-module and it is generated in degree zero.

\s \label{RR-2} Let $(A,\mathfrak{m})$ be a \CM\ local ring of dimension $d$ and $M$ be a finite $A$-module with $\depth{M}\geq 2$. Let $x$ be an $M$-superficial element. Set $N=M/xM$. Then for $n\geq 0$ we have exact sequence  (see \cite[2.2]{Pu2})
\begin{equation}
0\rt \frac{(\mathfrak{m}^{n+1}M:x)}{\mathfrak{m}^nM}\rt \frac{\widetilde{\mathfrak{m}^nM}}{\mathfrak{m}^nM}\rt \frac{\widetilde{\mathfrak{m}^{n+1}M}}{\mathfrak{m}^{n+1}M} \rt \frac{\widetilde{\mathfrak{m}^{n+1}N}}{\mathfrak{m}^{n+1}N}.
\end{equation}

In particular, we have exact sequence
\begin{equation}
0\rt \frac{\widetilde{\mathfrak{m}M}}{\mathfrak{m}M}\rt \frac{\widetilde{\mathfrak{m}N}}{\mathfrak{m}N}.
\end{equation}

If $\depth{M}=1$, then for all $n\geq 0$ we have following exact sequence 
\begin{equation}
0\rt \frac{(\mathfrak{m}^{n+1}M:x)}{\mathfrak{m}^nM}\rt \frac{\widetilde{\mathfrak{m}^nM}}{\mathfrak{m}^nM}\rt \frac{\widetilde{\mathfrak{m}^{n+1}M}}{\mathfrak{m}^{n+1}M}.
\end{equation}

 The next result is a basic fact from linear algebra.
\begin{proposition}\label{vector} Let $V$ be a vector space of dimension $d\geq2 $ over an infinite field $k$. Let $V_1,\ldots V_n$ be finitely many proper subspaces of $V$. If dim$_k(V_i)\leq $ dim$_kV-2$, then there exists a subspace $H=ka\oplus kb$ where $a,b\in V$ such that $H\cap V_i=0$ for $i=0,\ldots, n$.
\end{proposition}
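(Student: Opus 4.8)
The plan is to reduce the whole statement to the standard fact that a vector space over an infinite field is never a finite union of proper subspaces, and then to choose the two generators of $H$ one at a time.

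First I would produce a single good vector $a$. Every $V_i$ is a proper subspace and $k$ is infinite, so $\bigcup_i V_i \neq V$, and I may pick $a \in V \setminus \bigcup_i V_i$. This choice is forced: if $a \in V_i$ for some $i$ then any $2$-dimensional $H$ containing $a$ already meets $V_i$ in $a$, so $a \notin V_i$ for every $i$ is a necessary first requirement.

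Next I would linearize the remaining condition. For each $i$ set $W_i = V_i + ka$. Since $\dim_k V_i \leq d-2$, we get $\dim_k W_i \leq d-1$, so each $W_i$ is still a proper subspace of $V$. Applying the union-of-subspaces fact once more to the finite family $\{W_i\}$, I pick $b \in V \setminus \bigcup_i W_i$. Because $a \in W_i$ we have $ka \subseteq W_i$, so $b \notin ka$ automatically; hence $a,b$ are linearly independent and $H = ka \oplus kb$ is genuinely $2$-dimensional (in the degenerate case $n=0$ one simply takes any plane, which exists as $d \geq 2$). To finish I verify $H \cap V_i = 0$: if $0 \neq \alpha a + \beta b \in V_i$ then $\beta = 0$ forces $a \in V_i$, contradicting the choice of $a$, while $\beta \neq 0$ forces $b = \beta^{-1}(\alpha a + \beta b) - \beta^{-1}\alpha a \in V_i + ka = W_i$, contradicting the choice of $b$.

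The only genuine input is the elementary lemma that over an infinite field a finite union of proper subspaces is proper, which can be cited or dispatched by a two-line induction; everything else is bookkeeping. The one point deserving care — and the step I would flag as the conceptual heart rather than a real obstacle — is the reduction of the geometric condition ``$H \cap V_i = 0$'' to a linear membership condition on the single vector $b$. That is exactly what the substitution $W_i = V_i + ka$ accomplishes, and it is precisely here that the codimension hypothesis $\dim_k V_i \leq d-2$ is consumed: it is what keeps $W_i$ proper after enlarging $V_i$ by the line $ka$, so that a suitable $b$ can still be found.
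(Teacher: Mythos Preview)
Your argument is correct. The paper itself does not supply a proof of this proposition; it is simply stated as ``a basic fact from linear algebra'' and left without justification, so there is no proof in the paper to compare against. Your two-step avoidance argument---choosing $a\notin\bigcup_i V_i$, then enlarging each $V_i$ to $W_i=V_i+ka$ and choosing $b\notin\bigcup_i W_i$---is the standard and natural way to establish this fact, and you have correctly identified that the codimension-two hypothesis is used exactly once, to keep each $W_i$ proper.
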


Following result is well-known (for instance see \cite[Lemma 2.33]{Mishra})
\begin{lemma}\label{le1}
	Let $(A,\mathfrak{m})$ be a complete hypersurface ring of dimension $d$ with infinite residue field and multiplicity $e(A)=e$. Let  $M$ be a MCM module. Let $\underline{x}=x_1,\ldots,x_d$ be a maximal $A$-superficial sequence. If $M$ has no free summand, then $M_d=M/(x_1,\ldots,x_d)M$ also has no free summand.
\end{lemma}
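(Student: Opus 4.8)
The plan is to pass to the matrix-factorization description of MCM modules over a hypersurface and show that minimality is preserved under the reduction. Writing $A=Q/(f)$ with $(Q,\n)$ regular local of dimension $d+1$ and $f\in\n^e\setminus\n^{e+1}$, the module $M$, being MCM, has a minimal presentation $0\rt Q^t\xrightarrow{\phi}Q^t\rt M\rt 0$ with $t=\mu(M)$; since $f$ annihilates $M$ there is a matrix $\psi$ with $\phi\psi=\psi\phi=fI_t$ and $M=\operatorname{coker}\phi$. The reformulation I would use is the standard fact that $M$ has no free summand if and only if the matrix factorization $(\phi,\psi)$ is \emph{reduced}, i.e.\ all entries of both $\phi$ and $\psi$ lie in $\n$. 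Minimality of the presentation already puts the entries of $\phi$ in $\n$, so the content is that ``no free summand'' is equivalent to the entries of $\psi$ lying in $\n$: a unit entry of $\psi$ can be cleared by invertible row and column operations to put $\psi$ in the block form $\operatorname{diag}(1,\psi')$, whence $\phi=f\psi^{-1}=\operatorname{diag}(f,\phi')$ and $\operatorname{coker}(f)\cong A$ splits off as a free summand; conversely a free summand produces exactly such a block.

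With this reformulation I would argue by reducing one element at a time and inducting on $d$, so it suffices to treat the single step: with $x=x_1$ the first element of the maximal $A$-superficial sequence, $N:=M/x_1M$ has no free summand over $A/(x_1)$. Lift $x_1$ to $\tilde x_1\in Q$; since $x_1$ may be taken of order one, $\tilde x_1\in\n\setminus\n^2$, so $Q':=Q/(\tilde x_1)$ is again regular local (of dimension $d$). Because $x_1$ is a nonzerodivisor on $A$, the sequence $\tilde x_1,f$ is $Q$-regular, so the image $\overline f$ of $f$ in $Q'$ is nonzero and $A/(x_1)=Q'/(\overline f)$ is a hypersurface ring.

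Now $N=M\otimes_Q Q'=\operatorname{coker}(\overline\phi)$ with $\overline\phi=\phi\otimes Q'$, and $(\overline\phi,\overline\psi)$ is a matrix factorization of $\overline f$ over the regular local ring $Q'$. Every entry of $\overline\phi$ and $\overline\psi$ is the image of an entry of $\phi$ or $\psi$, all of which lie in $\n$, so every entry of $\overline\phi,\overline\psi$ lies in the maximal ideal of $Q'$ and none is a unit; thus $(\overline\phi,\overline\psi)$ is again reduced, and by the criterion above $N=\operatorname{coker}\overline\phi$ has no free $A/(x_1)$-summand. That $\overline\phi$ still presents $N$ (i.e.\ is injective) follows from $\det\overline\phi\cdot\det\overline\psi=\overline f^{\,t}\ne 0$ in the domain $Q'$. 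Since $x_1$ is $M$-regular, $N$ is again MCM over the hypersurface $A/(x_1)$ of dimension $d-1$, and the images $\overline{x_2},\dots,\overline{x_d}$ form a maximal $A/(x_1)$-superficial sequence, so the induction continues and yields the claim for $M_d$ after $d$ steps.

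The step I expect to require the most care is the equivalence ``no free summand $\iff$ reduced matrix factorization,'' especially the direction that a unit entry of $\psi$ forces a free summand, which rests on the block manipulation above together with the identity $\phi=f\psi^{-1}$. Once that dictionary is in place the reduction is essentially formal: passing modulo an order-one superficial element keeps the ambient ring regular, keeps $f$ nonzero, and keeps every entry of $\phi$ and $\psi$ inside the maximal ideal — and it is precisely this last property that encodes the absence of a free summand. The only points needing explicit verification are that the superficial elements have order one (so that each successive quotient of $Q$ stays regular) and that $\overline f\ne 0$, both of which follow from $\underline x$ being a maximal $A$-superficial sequence.
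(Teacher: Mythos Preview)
The paper does not actually prove this lemma: it is stated as ``well-known'' and referred to \cite[Lemma 2.33]{Mishra}. So there is no in-paper argument to compare against.

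Your proof is correct and is exactly the standard matrix-factorization argument one would expect. A couple of small remarks. First, your parenthetical ``since $x_1$ may be taken of order one'' is in fact automatic, not a choice: if $\dim A\ge 1$ and $x\in\m^2$ were $A$-superficial, then $\m^{n-2}\subseteq(\m^n:x)$ would force $\m^{n-2}=\m^{n-1}$ for large $n$, which is impossible. Second, the shorthand $\phi=f\psi^{-1}$ is heuristic (since $\psi$ is not invertible over $Q$); what you really use is that after replacing $(\phi,\psi)$ by $(V^{-1}\phi U^{-1},\,U\psi V)$ so that $U\psi V=\operatorname{diag}(1,\psi')$, the two identities $\phi\psi=fI$ and $\psi\phi=fI$ together force the new $\phi$ into block form $\operatorname{diag}(f,\phi')$, which is the free-summand splitting. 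Third, you implicitly use that an $A$-regular element is $M$-regular for an MCM module $M$; this is fine because over a Cohen--Macaulay ring any parameter is part of a system of parameters, and every system of parameters is an $M$-regular sequence when $M$ is MCM. With these points made explicit, the induction runs cleanly and your argument is complete.
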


{\bf Convention}:
Let $M$ be a maximal Cohen-Macaulay  module of dimension $d$ and $\underline{x}=x_1,\ldots,x_d$ be a maximal $\phi$-superficial sequence, then

$M_0=M$ and $M_t=M/(x_1,\ldots,x_t)M$ for $t=1,\ldots,d$.

\section{\bf  Main Result}
We first consider the case when MCM module $M$ has no free summand.
\begin{theorem}
	Let $(A,\mathfrak{m})$ be a   hypersurface ring of dimension $d$ with $e(A)=3$. Let $M$ be an MCM module with no free summand. Now if  $\mu(M)=4$, then $\depth{G(M)}\geq d-3$.
\end{theorem}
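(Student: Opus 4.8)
The plan is to pass to the matrix factorization of $M$ and to split the argument according to the invariant $i(M)$ of \ref{i(M)}. First I would invoke the base changes of \ref{Base change} and \ref{compl_and_inf} to assume $A$ is complete with infinite residue field, so that $A=Q/(f)$ with $(Q,\mathfrak{n})$ regular and $f\in\mathfrak{n}^3\setminus\mathfrak{n}^4$ (this is the meaning of $e(A)=3$). Since $M$ is MCM with $\mu(M)=4$ it has a minimal presentation $0\to Q^4\xrightarrow{\phi}Q^4\to M\to0$, and the hypersurface structure produces a partner $\psi$ with $\phi\psi=\psi\phi=fI_4$. Because $M$ has no free summand, minimality of the presentation forces all entries of both $\phi$ and $\psi$ to lie in $\mathfrak{n}$; hence $\det\phi,\det\psi\in\mathfrak{n}^4$, and from $\det\phi\cdot\det\psi=f^4$ (up to a unit) I get $v_Q(\det\phi)+v_Q(\det\psi)=4\cdot3=12$ with both terms at least $4$. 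Using $e(M)=v_Q(\det M)=v_Q(\det\phi)$ from the set-up of \ref{i(M)} (see \cite{PuMCM}), this pins down $e(M)\in\{4,5,6,7,8\}$; moreover $i(M)\ge2$ would put every entry of $\phi$ in $\mathfrak{n}^2$, forcing $v_Q(\det\phi)\ge8$, so in fact $i(M)\in\{1,2\}$.

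The case $i(M)=2$ is then immediate. Here $v_Q(\det\phi)=8$, so $e(M)=8=\mu(M)\,i(M)$. Cutting $M$ by a maximal $\phi$-superficial sequence down to dimension one preserves $\mu(M)$ and $i(M)$ by conditions (2),(3) of \ref{phi}, and preserves the multiplicity since the sequence is superficial; thus \ref{d=1} applies to the one-dimensional reduction and shows its associated graded module is \CM. Sally-descent (\ref{Sally-des}) then propagates this upward, giving that $G(M)$ is \CM\ and hence $\depth G(M)=d\ge d-3$.

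The substantive case is $i(M)=1$. For $d\le3$ the bound $d-3\le0$ is trivial, so I assume $d\ge4$, and I would use Sally-descent (\ref{Sally-des}) with $c=d-4$ to reduce the claim $\depth G(M)\ge d-3$ to the single statement that $\depth G(M)\ge1$ when $\dim M=4$: cutting by a general $\phi$-superficial sequence of length $d-4$ yields an MCM module over a hypersurface of dimension $4$ with $e=3$, again with $\mu=4$, $i=1$ and no free summand (the last by \ref{le1}), and $\depth G(M)\ge d-3$ is equivalent to $\depth G(M_{d-4})\ge1$. To analyse this four-dimensional module I would reduce further, along the same $\phi$-superficial sequence, to dimension one, where \ref{d=1} gives $h_M(z)=4+h_1(M)z+h_2(M)z^2$ with $h_1(M),h_2(M)\ge0$ (the degree is at most $2$ because $\operatorname{red}(M)\le\operatorname{red}(A)=2$), and $e(M)=4+h_1(M)+h_2(M)\le8$ leaves only finitely many admissible Hilbert series. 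The plan is then to track, through each reduction step, the modules $(\mathfrak{m}^{n+1}M:x)/\mathfrak{m}^nM$ using the exact sequences \ref{exact seq}, \ref{exact d}, \ref{exact1} and \ref{RR-2}, together with the three-term description of $G(M)/(\underline{x^*})G(M)$ (Hilbert series $4+\alpha z+\beta z^2$ with $\beta\le\alpha\le4$), in order to show that $x_1^*$ is $G(M)$-regular in the four-dimensional reduction, equivalently $b_n(x_1,M)=0$ for all $n$ (\ref{Property}(6)), equivalently $h_M(z)=h_{M_1}(z)$ (\ref{Property}(8)).

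The main obstacle is precisely this last case $i(M)=1$ in dimension four. Unlike the $i(M)=2$ case, $G(M)$ genuinely need not be \CM\ here, consistent with the $\operatorname{red}(M)=2$ examples cited in the introduction, so one cannot hope for full depth and must instead confine the failure of regularity to a bounded range. Concretely, the difficulty is to rule out $\depth G(M)=0$ for each admissible pair $(h_1,h_2)$, that is, to prove $\mathfrak{m}^2M\cap x_1M=x_1\mathfrak{m}M$ (the $n=1$ instance of \ref{Property}(3)) together with the analogous higher equalities that make every $b_n(x_1,M)$ vanish. This is where the matrix-factorization constraints $i(M)=1$ and $4\le e(M)\le8$ must be fed into the exact sequences to force the relevant lengths to be zero; I expect this case-by-case bookkeeping over the finitely many Hilbert series, rather than any single conceptual step, to be the crux of the argument.
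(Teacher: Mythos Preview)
Your overall architecture matches the paper's: reduce to $A$ complete with infinite residue field, cut down along a $\phi$-superficial sequence, and split on $e(M)\in\{4,\dots,8\}$ (your split on $i(M)$ is equivalent, $i(M)=2$ being exactly $e(M)=8$). Your treatment of $i(M)=2$ via \ref{d=1}(2), and your reduction of the $i(M)=1$ case to showing $\depth G(M)\ge1$ in dimension four via Sally-descent, are both what the paper does. The cases $e(M)=4,5,6$ really are handled by the exact sequences you list together with \cite[Theorem~5.1]{apprx} on $\delta\le2$.

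The gap is your assessment of the hardest subcase, which in the paper is $e(M)=7$ with all three reductions $M_1,M_2,M_3$ having $\depth G(M_i)=0$. You call what remains ``case-by-case bookkeeping over the finitely many Hilbert series,'' but the argument the paper needs here is not bookkeeping and does not fall out of \ref{exact seq}, \ref{exact d}, \ref{exact1}, \ref{RR-2} alone. One must first run a Ratliff-Rush analysis showing that $\widetilde{G(M_3)},\widetilde{G(M_2)},\widetilde{G(M_1)}$ are all \CM\ with $\widetilde{h}=3+4z$; then, assuming for contradiction $\widetilde{\mathfrak{m}M}\ne\mathfrak{m}M$, deduce that $\widetilde{G(M)}$ is \CM\ and hence $G(M)$ is \emph{generalized} \CM\ by Proposition~\ref{ASSG}. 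The decisive step is then Proposition~\ref{vector}: since the minimal primes of $G(M)$ have codimension at least two in $\mathfrak{m}/\mathfrak{m}^2$, there is a two-dimensional space $H=ku^*\oplus kv^*$ of superficial directions, and comparing $\overline{ua}$ with $\overline{va}$ in the one-dimensional $\widetilde{\mathfrak{m}^2M}/\mathfrak{m}^2M$ manufactures a superficial element $u-\theta v$ with $(u-\theta v)a\in\mathfrak{m}^2M$, forcing $a\in\mathfrak{m}M$, a contradiction. You should flag Propositions~\ref{ASSG} and~\ref{vector}, and the passage through $\widetilde{G(M)}$, as the tools your outline is missing; without them the $e(M)=7$ case does not close.
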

\begin{proof}
	We may assume $A$ is complete with infinite residue field (see \ref{compl_and_inf}).
	Since $e(A)=3$,  we can take $(A,\mathfrak{m})=(Q/(g),\mathfrak{n}/(g))$ where $(Q,\mathfrak{n})$ is a regular local ring of dimension $d+1$ and $g\in \mathfrak{n}^3\setminus\mathfrak{n}^4$. This implies that $h_A(z)=1+z+z^2$ and $\mathfrak{m}^3=J\mathfrak{m}^2$, where $J$ is a minimal reduction of $A$. 
	
	Let dim$M\geq1$ and $0\rt Q^4\xrightarrow{\phi} Q^4\rt M\rt 0$ be a minimal presentation of $M$. Let $\underline{x}=x_1,\ldots,x_d$ be a maximal $\phi$-superficial sequence (see \ref{phi}). Set $M_d=M/\underline{x}M$ and $(Q',(y))=(Q/(\underline{x}),\mathfrak{n}/(\underline{x}))$. Note $A/(\underline{x})A=Q'/(y^3)$.
	
	Clearly, $Q'$ is DVR and so $M_d\cong Q'/(y^{a_1})\oplus Q'/(y^{a_2})\oplus Q'/(y^{a_3})\oplus Q'/(y^{a_4})$.
	
	From the Lemma \ref{le1}, $M_d$ has no free summand. So, we can assume that $1\leq a_1\leq a_2\leq a_3\leq a_4\leq 2$.

	This implies $4\leq e(M)\leq 8$. We consider all cases separately:
	
	{\bf Case(1): $e(M)=4$.}\\ 
	In this case $M_d\cong Q'/(y)\oplus Q'/(y)\oplus Q'/(y)\oplus Q'/(y)$. This implies $h_{M_d}=4$, so $e(M_d)=\mu(M_d)=4$.\\ For dim$M\geq1$, $e(M)=\mu(M)=4$. Also, notice that $i(M)=i(M_d)=1$. Since $e(M)=\mu(M)=4$, $M$ is an Ulrich module (see \ref{Ulrich}).  This implies that $G(M) $ is Cohen-Macaulay and $h_M(z)=4$ (see \cite[Theorem 2]{PuMCM}). 
	
	{\bf Case(2): $e(M)=5$.}\\
	In this case   $M_d\cong Q'/(y)\oplus Q'/(y)\oplus Q'/(y)\oplus Q'/(y^2)$ and   $h_{M_d}(z)=4+z$.

	This implies  $i(M)=i(M_d)=1, \mu(M)=4$ and $e(M)=e(M_d)=5$. 
	Now for dim$M\geq 1$ we have $\depth{G(M)}\geq d-1$ (see \cite[Theorem 1.1]{Mishra}). Also, we have two cases:\\ First case when  $h_M(z)=4+z$. In this case $G(M)$ is \CM.\\
	Second case when $h_M(z)=4+z^2$. In this case depth$G(M)=d-1$.

	{\bf Case(3): $e(M)=6$.}\\
	In this case  $M_d\cong Q'/(y)\oplus Q'/(y)\oplus Q'/(y^2)\oplus Q'/(y^2)$ and this implies  $h_{M_d}(z)=4+2z$.\\
	We first consider the  case when dim$M=4$ because if dim$M\leq3$ there is nothing to prove.\\
	Let $\underline{x}=x_1,x_2,x_3,x_4$ be a maximal $\phi$-superficial sequence. Set $M_1=M/x_1M$, $M_2=M/(x_1,x_2)M$, $M_3=M/(x_1,x_2,x_3)M$, $J_1=(x_2,x_3,x_4)$  and $J_2=(x_3,x_4)$.\\ 
	Since dim$M_3=1$ we can write $h$-polynomial of $M_3$ as $h_{M_3}(z)=4+(\rho_0(M_3)-\rho_1(M_3))z+\rho_1(M_3)z^2$
	where $\rho_n(M_3)=\ell(\mathfrak{m}^{n+1}M_3/{x_4\mathfrak{m}^nM_3})$ (see \ref{rho_formula}).
	So we have $\rho_0(M_3)=2$ and since all the coefficients of $h_M$ are non-negative (see \ref{d=1}(1)), so possible values of $\rho_1(M_3)$ are $0$, $1$ and $2$.\\
	{\bf Subcase(i): $\rho_1(M_3)=0$.}\\
	In this case, $M_3$ has minimal multiplicity and $h_{M_3}(z)=4+2z$. This implies  $G(M_3)$ is \CM. By Sally-descent $G(M)$ is \CM.\\
	{\bf Subcase(ii): $\rho_1(M_3)=1$. }\\
	In this case,  $h_{M_3}(z)=4+z+z^2$. This implies  depth$G(M_3)=0$, because $h_{M_3}(z)\ne h_{M_4}(z)$ (see \ref{Property}).\\
	Since dim$M_2=2$, we have (see \ref{Property})
	$$e_2(M_2)=e_2({M_3})-\sum b_i(x_3,M_2),$$
	where $b_i(x_3,M_2)=\ell(\mathfrak{m}^{i+1}M_2:x_3/\mathfrak{m}^iM_2)$.
	We know that $e_2(M_2) $ and $\sum b_i(x_3,M_2)  $ are non-negative integers. Note we  have  $e_2({M_3})=1$. This implies $\sum b_i(x_3,M_2) \leq 1 $.
	
	Since red$(M)\leq2$, from exact sequence 
	\begin{align*}
	0 \rt \frac{\mathfrak{m}^nM_2:J_2}{\mathfrak{m}^{n-1}M_2}\rt \frac{\mathfrak{m}^nM_2:x_3}{\mathfrak{m}^{n-1}M_2}
	 \rt \frac{\mathfrak{m}^{n+1}M_2:x_3}{\mathfrak{m}^{n}M_2}
	\rt \frac{\mathfrak{m}^{n+1}M_2}{J_2\mathfrak{m}^nM_2}
	  \rt \frac{\mathfrak{m}^{n+1}{M_3}}{x_4\mathfrak{m}^n{M_3}}\rt 0
	\end{align*}
	we get, if $b_1(x_3,M_2)=0$ then  $b_i(x_3,M_2)=0$ for all $i\geq2$. So, $\sum b_i(x_3,M_2)  \ne 0$ implies $b_1(x_3,M_2)=1$. \\
	Now we have two cases.\\
	{\bf Subcase (ii).(a):} When $b_1(x_3,M_2)=0$.\\ This implies  $\depth{G(M_2)}\geq 1$ (see \ref{Property}). In fact $\depth{G(M_2)}=1$ otherwise $G(M_2)$ is \CM. This  is not possible because $\depth{G(M_3)}=0$. Now by Sally-descent depth$G(M)=3$ and $h_M(z)=4+z+z^2$. \\
	{\bf Subcase (ii).(b):} When  $b_1(x_3,M_2)=1$.\\ So, $\depth{G(M_2)}=0$ (see \ref{Property}). In this case $h_{M_2}(z)=h_{M_3}(z)-(1-z)^2z=4+3z^2-z^3$ (see \ref{mod-sup}).
	
	From the above exact sequence we get
	\begin{align} \label{M_2}
	0\rt \mathfrak{m}^{2}M_2:x_3/\mathfrak{m}M_2
	\rt \mathfrak{m}^{2}M_2/J_2\mathfrak{m}M_2
	\rt \mathfrak{m}^{2}{M_3}/x_4\mathfrak{m}{M_3}\rt 0.
	\end{align}
	
	So, $\ell(\mathfrak{m}^{2}M_2/J_2\mathfrak{m}M_2)=\rho_1(M_3)+b_1(x_3,M_2)=2$.\\
	Since dim$M_1=3$,  from short exact sequence (see \ref{exact d}) $$0\rt \mathfrak{m}^2M_1:x_2/\mathfrak{m}M_1\rt {\mathfrak{m}^2M_1}/J_1\mathfrak{m}M_1\rt \mathfrak{m}^2M_2/{J_2}\mathfrak{m}M_2\rt 0$$
	we have $\ell(\mathfrak{m}^2M_1/J_1\mathfrak{m}M_1)=2+\ell(\mathfrak{m}^2M_1:x_2/\mathfrak{m}M_1)$.
	We also have
		$$G(M_1)/(x_2^*,x_3^*,x_4^*,)G(M_1)=M_1/\mathfrak{m}M_1 \oplus \mathfrak{m}M_1/J_1M_1\oplus \mathfrak{m}^2M_1/J_1\mathfrak{m}M_1.$$
	By considering its Hilbert series we get   $\ell(\mathfrak{m}^2M_1/J_1\mathfrak{m}M_1)\leq 2$, because in this case $\ell(\mathfrak{m}M_1/J_1M_1)=2$ (see \ref{overline{G(M)}}). Therefore we have $\ell(\mathfrak{m}^2M_1/J_1\mathfrak{m}M_1)= 2$.
	
	We also know that $\mathfrak{m}^2M_1\sub J_1M_1$.\\
	So in this case 
	$$\delta =\sum\ell(\mathfrak{m}^{n+1}M_1\cap J_1M_1/J_1\mathfrak{m}^nM_1)= 2$$
	
	We know that if $\delta\leq2$ then $\depth{G(M)}\geq d-\delta$ (see \cite[Theorem 5.1]{apprx}). So we have $\depth{G(M_1)}\geq 1$. Also notice that $\depth{G(M_1)}=1$, because $\depth{G(M_2)}=0$.\\ 
	By Sally-descent  $\depth{G(M)}=2$ and $h_M(z)=4+3z^2-z^3$. \\
	{\bf Subcase(iii): $\rho_1(M_3)=2$.}\\
	In this case $h_{M_3}(z)=4+2z^2$. This implies depth$G(M_3)=0$, because $h_{M_3}(z)\ne h_{M_4}(z) $ (see \ref{Property}).\\
	Since dim$M_2=2$, we have (see \ref{Property})
	$$e_2(M_2)=e_2({M_3})-\sum b_i(x_3,M_2).$$
	We know that $e_2(M_2) $ and $\sum b_i(x_3,M_2)  $ are non-negative integers. In this case we also have  $e_2({M_3})=2$. This implies $\sum b_i(x_3,M_2) \leq 2$.\\
	By an argument given in subcase (ii) we know that  $b_1(x_3,M_2)=0$ implies all $b_i(x_3,M_2)=0$.\\
	Now we have two cases.\\
	{\bf Subcase (iii).(a):} When $b_1(x_3,M_2)=0$.\\
	So, in this case $\depth{G(M_2)}=1$ (see \ref{Property}). Also notice $\depth{G(M_2)}\ne 2$ because $\depth{G(M_3)}=0$. By Sally-descent $\depth{G(M)}=3$ and $h_M(z)=4+2z^2$.\\  
	{\bf Subcase (iii).(b):} When $b_1(x_3,M_2)\neq0$.\\ Now from the exact sequence (\ref{M_2}) we get $\ell(\mathfrak{m}^2M_2/J_2\mathfrak{m}M_2)=\rho_1(M_3)+b_1(x_3,M_2)\geq3$.\\  Now consider  $\overline{G(M_2)}=G(M_2)/(x_3^*,x_4^*)G(M_2)$. Then we get 
	$$\overline{G(M_2)}=M_2/\mathfrak{m}M_2\oplus \mathfrak{m}M_2/J_2M_2\oplus\mathfrak{m}^2M_2/J_2\mathfrak{m}M_2$$ 
	Its  Hilbert series is $4+2 z+(\rho_1(M_3)+b_1(x_3,M_2))$,  because $\ell(\mathfrak{m}M_2/J_2M_2)=2$. But this is not  a possible Hilbert series (see \ref{overline{G(M)}}). Therefore  the case when $b_1(x_3,M_2)\neq0$ is not possible.
	
	Now assume dim$M\geq5$ and $\underline{x}=x_1,\ldots,x_d$ a maximal $\phi$-superficial sequence. Set $M_{d-4}=M/(x_1,\ldots,x_{d-4})M$. We now have three cases.\\
	First case when $G(M_{d-4})$ is \CM. By Sally-descent $G(M)$ is \CM\ and $h_M(z)=4+2z$.\\
	Second case when $\depth{G(M_{d-4})}=3$. By Sally-descent $\depth{G(M)}=d-1$ and $h_M(z)=4+z+z^2$ or $h_M(z)=4+2z^2$.\\
	Third case when  $\depth{G(M_{d-4})}=2$. By Sally-descent $\depth{G(M)}=d-2$ and $h_M(z)=4+3z^2-z^3$.

	{\bf Case(4): $e(M)=7$.}\\
	In this case  $M_d\cong Q'/(y)\oplus Q'/(y^2)\oplus Q'/(y^2)\oplus Q'/(y^2)$ and $h_{M_d}(z)=4+3z$.\\
	We first consider the case when dim$M=4$ because if dim$M\leq3$ there is nothing to prove.\\
	Let $\underline{x}=x_1,x_2,x_3,x_4$ be a maximal $\phi$-superficial sequence. Set $M_1=M/x_1M$, $M_2=M/(x_1,x_2)M$, $M_3=M/(x_1,x_2,x_3)M$, $J_1=(x_2,x_3,x_4)$, $J_2=(x_3,x_4)$ and $J=(x_1,x_2,x_3,x_4)$.\\ 
	We first prove two claims:
	
	{\bf Claim(1):} $\widetilde{\mathfrak{m}^iM_3}=\mathfrak{m}^iM_3$ for all $i\geq2.$\\
	{\bf Proof of Claim:} Since we have $\mathfrak{m}^{n+1}M_3=x_4\mathfrak{m}^nM_3$ for all $n\geq2$. So $(\mathfrak{m}^{n+1}M_3:x_4)=\mathfrak{m}^nM_3$ for all $n\geq2$. We have  exact sequence (see \ref{RR-2}) $$0\rt \mathfrak{m}^{n+1}M_3:x_4/\mathfrak{m}^nM_3\rt \widetilde{\mathfrak{m}^nM_3}/\mathfrak{m}^nM_3\rt \widetilde{\mathfrak{m}^{n+1}M_3}/\mathfrak{m}^{n+1}M_3.$$ We also know that for $n\ggg 0$, $\widetilde{\mathfrak{m}^nM_3}=\mathfrak{m}^nM_3$. By using these facts it is clear that $\widetilde{\mathfrak{m}^iM_3}=\mathfrak{m}^iM_3$ for all $i\geq2.$
	
	{\bf Claim(2):} $\ell(\widetilde{\mathfrak{m}M_3}/\mathfrak{m}M_3)\leq1$.\\
	{\bf Proof of the claim:} Since $\mu(M_3)=4$,  we have $\ell(\widetilde{\mathfrak{m}M_3}/\mathfrak{m}M_3)\leq4$.\\ If $\ell(\widetilde{\mathfrak{m}M_3}/\mathfrak{m}M_3)=4$ then $\widetilde{\mathfrak{m}M_3}=M_3$. So $\mathfrak{m}^3M_3=\mathfrak{m}^2M_3$ because we know that $\widetilde{\mathfrak{m}^iM_3}=\mathfrak{m}^iM_3$ for all $i\geq2$. So, from here we have $\mathfrak{m}^2M_3=0$ which is a contradiction. Therefore $\ell(\widetilde{\mathfrak{m}M_3}/\mathfrak{m}M_3)\leq3$.\\
	If possible assume that $\ell(\widetilde{\mathfrak{m}M_3}/\mathfrak{m}M_3)>1$, so we have $M_3=\langle m_1,m_2,l_1,l_2 \rangle$ where $l_1,l_2\in \widetilde{\mathfrak{m}M_3}\setminus\mathfrak{m}M_3$. This implies $l_i\mathfrak{m}\sub \widetilde{\m^2M_3}=\mathfrak{m}^2M_3$ for $i=1,2$. Now if we set $\mathfrak{m}'=\mathfrak{m}/(x_1,x_2,x_3,x_4)$ then $\mathfrak{m}'$ is a principal ideal. We also know that $\ell(\mathfrak{m}M_4)=\ell(\mathfrak{m}'M_4)$ and $\mathfrak{m}^2M_4=\mathfrak{m'}^2M_4=0$. From here  we get  $\ell({\mathfrak{m}M_4})\leq2$. This is a contradiction because we know that $\ell({\mathfrak{m}M_4})=3$. So, $\ell(\widetilde{\mathfrak{m}M_3}/\mathfrak{m}M_3)\leq1$.\\
	Now we have two cases.\\
	{\bf Subcase (i):} When $\widetilde{\mathfrak{m}M_3}=\mathfrak{m}M_3$.\\ So, we have $\widetilde{\mathfrak{m}^iM_3}=\mathfrak{m}^iM_3$ for all $i$, because we know that $\widetilde{\mathfrak{m}^iM_3}=\mathfrak{m}^iM_3$ for all $i\geq2$ (from claim(1)). So in this case $\depth{G(M_3)}=1$, i.e. $G(M_3)$ is \CM. By Sally-descent $G(M)$ is \CM\ and $h_M(z)=4+3z$.\\
	{\bf Subcase (ii):}  When $\ell(\widetilde{\mathfrak{m}M_3}/\mathfrak{m}M_3)=1$. In this subcase from \ref{RR-2}(3) and the fact $\widetilde{\mathfrak{m}^2M_3}=\mathfrak{m}^2M_3$ we get $\ell(\mathfrak{m}^2M_3:x_4/\mathfrak{m}M_3)=1$.  \\
	Since dim$M_3=1$ we can write $h$-polynomial of $M_3$ as $h_{M_3}(z)=4+(\rho_0(M_3)-\rho_1(M_3))z+\rho_1(M_3)z^2$ where $\rho_n(M_3)=\ell(\mathfrak{m}^{n+1}M_3/{x_4\mathfrak{m}^nM_3})$ (see \ref{rho_formula}). We have  $\rho_0(M_3)=\ell(\mathfrak{m}M_2/x_3M_2)=3$ and coefficients of $h_{M_3}$ are non-negative (see \ref{d=1}(1)). 
	
	From short exact sequence (see \ref{exact d})
	$$0\rt \mathfrak{m}^2M_3:x_4/\mathfrak{m}M_3\rt \mathfrak{m}^2M_3/x_4\mathfrak{m}M_3\rt \mathfrak{m}^2M_4/0\rt 0$$
	we have $\rho_1(M_3)=b_1(x_4,M_3)$ because $\mathfrak{m}^2M_4=0$. 
	
So we have $\rho_1(M_3)=b_1(x_4,M_3)=\ell(\mathfrak{m}^2M_3:x_4/\mathfrak{m}M_3)=1$. This implies $\depth{G(M_3)}=0$, because $h_{M_3}(z)\ne h_{M_4}(z)$ (see \ref{Property}). We have  $h_{M_3}(z)=4+2z+z^2.$\\
	We also have $\widetilde{\mathfrak{m}^2M_3}=x_4\widetilde{\mathfrak{m}M_3}$. In fact, if $a\in\widetilde{\mathfrak{m}^2M_3}  $ then we can write $a=xp$ because $\widetilde{\mathfrak{m}^2M_3}=\mathfrak{m}^2M_3\sub (x_4)M_3$. This implies that $p\in \widetilde{\mathfrak{m}M_3}$ because $(\widetilde{\mathfrak{m}^2M_3}:x_4)=\widetilde{\mathfrak{m}M_3}$. So we have $\widetilde{\mathfrak{m}^{i+1}M_3}=x_4\widetilde{\mathfrak{m}^iM_3}$ for all $i\geq 1$ because $\mathfrak{m}^{i+1}M_3=x_4\mathfrak{m}^iM_3$ for $i\geq2$. This implies that $\widetilde{G(M_3)}$ has minimal multiplicity and $\widetilde{h_{M_3}}(z)=3+4z$.\\
	Here we have two cases.\\
	{\bf Subcase (ii).(a):} When $\depth{G(M_2)}\ne 0$.\\ Then we have $\depth{G(M_2)}=1$ because $\depth{G(M_3)}=0$. By Sally-descent $\depth{G(M)}=3$ and $h_M(z)=4+2z+z^2$.\\
	{\bf Subcase (ii).(b): } When $\depth{G(M_2)}=0$.\\ 
	As $e_2(M_3)=1$, we get $\sum b_i(x_3,M_2)\leq 1$. So we have $b_1(x_3,M_2)=\ell(\mathfrak{m}^2M_2:x_3/\mathfrak{m}M_2)\neq 0$ (see \ref{Property}). Because $b_1(x_3,M_2)=0$ implies all $b_i(x_3,M_2)=0$, so $\depth G(M)\ne 0$, a contradiction (same argument as in Subcase(ii) of Case(3)). \\
	From exact sequences (see \ref{RR-2})
	$$0\rt \frac{\mathfrak{m}^{n+1}M_2:x_3}{\mathfrak{m}^nM_2}\rt \frac{\widetilde{\mathfrak{m}^nM_2}}{\mathfrak{m}^nM_2}\rt \frac{\widetilde{\mathfrak{m}^{n+1}M_2}}{\mathfrak{m}^{n+1}M_2}\rt \frac{\widetilde{\mathfrak{m}^{n+1}M_3}}{\mathfrak{m}^{n+1}M_3} $$
	and $$0 \rt \widetilde{\mathfrak{m}M_2}/\mathfrak{m}M_2\rt \widetilde{\mathfrak{m}M_3}/\mathfrak{m}M_3 $$
	we have  $$1\leq b_1(x_3,M_2)\leq \ell(\widetilde{\mathfrak{m}M_2}/\mathfrak{m}M_2)\leq \ell(\widetilde{\mathfrak{m}M_3}/\mathfrak{m}M_3)=1. $$
	This implies that $\ell(\widetilde{\mathfrak{m}M_2}/\mathfrak{m}M_2)=1$ and $b_1(x_3,M_2)=1$. From here we also get 	 $\widetilde{\mathfrak{m}^nM_2}=\mathfrak{m}^nM_2$ for all $n\geq2$,
	because from claim(1) we know that $\widetilde{\mathfrak{m}^nM_3}=\mathfrak{m}^nM_3$ for all $n\geq2$ (see \ref{RR-2}(1)).
	\\
	From the exact sequence (see \ref{exact d})
	$$ 0\rt \mathfrak{m}^{2}M_2:x_3/\mathfrak{m}M_2
	\rt \mathfrak{m}^{2}M_2/J_2\mathfrak{m}M_2
	\rt \mathfrak{m}^{2}{M_3}/x_4\mathfrak{m}{M_3}\rt 0$$
	we get $\ell(\mathfrak{m}^{2}M_2/J_2\mathfrak{m}M_2)=\rho_1+b_1(x_3,M_2)=2$.\\
	In this case we also have	
	$h_{M_2}(z)=h_{M_3}(z)-(1-z)^2z=4+z+3z^2-z^3$ (see \ref{mod-sup}).\\
	Since we have  $\ell(\widetilde{\mathfrak{m}M_2}/\mathfrak{m}M_2)=\ell(\widetilde{\mathfrak{m}M_3}/\mathfrak{m}M_3) $ and $\widetilde{\mathfrak{m}^iM_3}=\mathfrak{m}^iM_3$ for all $i\geq2$, from \cite[2.1]{Pu2}
	we get $$\overline{\widetilde{\mathfrak{m}^iM_2}}=\widetilde{\mathfrak{m}^iM_3}\ \ \text{ for all }  i\geq 1.$$	
	
	So, $\widetilde{G(M_2)}/x_3^*\widetilde{G(M_2)}=\widetilde{G(M_3)}$, this implies that $\widetilde{G(M_2)}$ is \CM \ and $\widetilde{h_{M_2}}(z)=3+4z$.\\
	From exact sequence  (see \ref{exact d})
	\begin{align}\label{M_1}
	0\rt \mathfrak{m}^{2}M_1:x_2/\mathfrak{m}M_1
	\rt \mathfrak{m}^{2}M_1/J_1\mathfrak{m}M_1
	\rt \mathfrak{m}^{2}{M_2}/J_2\mathfrak{m}{M_2}\rt 0
	\end{align}

	we have , if $\ell(\mathfrak{m}^2M_1:x_2/\mathfrak{m}M_1)=0$ then
	
	$\ell(\mathfrak{m}^2M_1/J_1\mathfrak{m}M_1)=2$, because $\ell(\mathfrak{m}^2M_2/J_2\mathfrak{m}M_2)=2$.\\
	{\bf Subcase (ii).(b).(1):} $\ell(\mathfrak{m}^2M_1:x_2/\mathfrak{m}M_1)=0$.\\  Consider $$\delta=\sum\ell(\mathfrak{m}^{n+1}M_1\cap J_1M_1/J_1\mathfrak{m}^nM_1).$$ 
	We know that if $\delta\leq2$ then $\depth{G(M_1)}\geq d-\delta$ (see \cite[Theorem 5.1]{apprx}).
	Since  $\delta=2$,  $\depth{G(M_1)}\geq1$. Notice that here $\depth{G(M_1)}=1$, because $\depth{G(M_2)}=0$. By Sally-descent $\depth{G(M)}=2$ and $h_M(z)= 4+z+3z^2-z^3$.\\
	{\bf Subcase (ii).(b).(2):} $\ell(\mathfrak{m}^2M_1:x_2/\mathfrak{m}M_1)\neq0$.\\ This implies $\depth{G(M_1)}=0$ (see \ref{Property}). 
	
	Now from exact sequences (see \ref{RR-2})
	$$0\rt \frac{\mathfrak{m}^{n+1}M_1:x_2}{\mathfrak{m}^nM_1}\rt \frac{\widetilde{\mathfrak{m}^nM_1}}{\mathfrak{m}^nM_1}\rt \frac{\widetilde{\mathfrak{m}^{n+1}M_1}}{\mathfrak{m}^{n+1}M_1}\rt \frac{\widetilde{\mathfrak{m}^{n+1}M_2}}{\mathfrak{m}^{n+1}M_2} $$
	and $$0 \rt \widetilde{\mathfrak{m}M_1}/\mathfrak{m}M_1\rt \widetilde{\mathfrak{m}M_2}/\mathfrak{m}M_2 $$
	
	we get $$1\leq \ell(\mathfrak{m}^2M_1:x_2/\mathfrak{m}M_1)\leq \ell(\widetilde{\mathfrak{m}M_1}/\mathfrak{m}M_1)\leq \ell(\widetilde{\mathfrak{m}M_2}/\mathfrak{m}M_2)=1. $$
	This implies $\ell(\mathfrak{m}^2M_1:x_2/\mathfrak{m}M_1)=\ell(\widetilde{\mathfrak{m}M_1}/\mathfrak{m}M_1)=1$. From here we also get  $\widetilde{\mathfrak{m}^iM_1}=\mathfrak{m}^iM_1$ for all $i\geq2$, because $\widetilde{\mathfrak{m}^iM_2}=\mathfrak{m}^iM_2$ for all $i\geq2$ (see \ref{RR-2}(1)). \\  From the short exact sequence (\ref{M_1})
	we have $\ell(\mathfrak{m}^2M_1/J_1\mathfrak{m}M_1)=3$.\\ Now since $\ell(\widetilde{\mathfrak{m}M_1}/\mathfrak{m}M_1)=\ell(\widetilde{\mathfrak{m}M_2}/\mathfrak{m}M_2)$ and $\widetilde{\mathfrak{m}^iM_1}=\mathfrak{m}^iM_1$ for all $i\geq2$, from \cite[2.1]{Pu2} we get $$\overline{\widetilde{\mathfrak{m}^iM_1}}=\widetilde{\mathfrak{m}^iM_2}\ \ \text{for all } \ i\geq 1.$$
	
	So $\widetilde{G(M_1)}/x_2^*\widetilde{G(M_1)}=\widetilde{G(M_2)}$, this implies that $\widetilde{G(M_1)}$ is \CM \ and $\widetilde{h_{M_1}}(z)=3+4z$.\\
	We can write the $h$-polynomial of $M_1$ as  $h_{M_1}(z)=\widetilde{h_{M_1}}(z)+(1-z)^4$.\\
	Consider  $$G(M)/(x_1^*x_2^*,x_3^*,x_4^*)G(M)=M/\mathfrak{m}M\oplus \mathfrak{m}M/JM\oplus \mathfrak{m}^2M/J\mathfrak{m}M.$$ After looking at its Hilbert series we get $\ell(\mathfrak{m}^2M/J\mathfrak{m}M)\leq3$, because $\ell(\mathfrak{m}M/JM)=3$ (see \ref{overline{G(M)}}).
	
	We have short exact sequence (see \ref{exact d})
	$$0\rt \mathfrak{m}^2M:x_1/\mathfrak{m}M\rt \mathfrak{m}^2M/J\mathfrak{m}M\rt \mathfrak{m}^2M_1/{J_1}\mathfrak{m}M_1\rt 0.$$
	This implies that $\ell(\mathfrak{m}^2M/J\mathfrak{m}M)\geq3$.\\ So we have $$(\mathfrak{m}^2M:x_1)=\mathfrak{m}M \ \text{and}\ \ell(\mathfrak{m}^2M/J\mathfrak{m}M)=3. $$
	Now we first prove a claim.\\
	{\bf Claim:} $\widetilde{\mathfrak{m}M}=\mathfrak{m}M$.\\
	{\bf Proof of the claim:} 	If $\widetilde{\mathfrak{m}M}\neq \mathfrak{m}M$.
	From exact sequence (see \ref{RR-2})
	$$0 \rt \widetilde{\mathfrak{m}M}/\mathfrak{m}M\rt \widetilde{\mathfrak{m}M_1}/\mathfrak{m}M_1 $$
	we get $\ell(\widetilde{\mathfrak{m}M}/\mathfrak{m}M)=1$, because $\ell(\widetilde{\mathfrak{m}M}/\mathfrak{m}M)\leq \ell(\widetilde{\mathfrak{m}M_1}/\mathfrak{m}M_1)=1$. Since  in this case    
	$\ell(\widetilde{\mathfrak{m}M}/\mathfrak{m}M)= \ell(\widetilde{\mathfrak{m}M_1}/\mathfrak{m}M_1)$ and $\widetilde{\mathfrak{m}^iM_1}={\mathfrak{m}^iM_1}$ for all $i\geq 2$, from \cite[2.1]{Pu2} we get
	$$\overline{\widetilde{\mathfrak{m}^iM}}=\widetilde{\mathfrak{m}^iM_1}\ \ \text{for all} \ i\geq 1.$$ This implies $\widetilde{G(M)}/x_1^*\widetilde{G(M)}=\widetilde{G(M_1)}$. So we get  $\widetilde{G(M)}$ is \CM \ and therefore $G(M)$ is generalised \CM.\\
	Let Ass$_{G(A)}G(M)=\{\mathcal{M},\mathcal{P}_1,\ldots,\mathcal{P}_s\}$, where $\mathcal{M}$ is maximal homogeneous ideal of $G(A)$ and $\mathcal{P}_i$'s are minimal primes in $G(A)$ (see \ref{ASSG}). Set $V=\mathfrak{m}/\mathfrak{m}^2$. We know that $\mathcal{P}_i\cap V\neq V$. Now if dim$\mathcal{P}_i\cap V=$ dim$V-1$, then dim$G(A)/\mathcal{P}_i\leq1$ and this is a contradiction as $\mathcal{P}_i$'s are minimal primes in $G(A).$\\
	Thus, dim$\mathcal{P}_i\cap V\leq$ dim$V-2$. So there exists $u^*,v^*\in V$ such that $H=ku^*+kv^*$ and $H\cap \mathcal{P}_i=0$ for $i=1,\ldots,s$ (see \ref{vector}). Thus if $\xi\in \mathfrak{m}$ such that $\xi^*\in H$ is non-zero then $\xi $ is a superficial element of $M$ (see \cite[Theorem 1.2.3]{Rossi}). Now since $\ell(\widetilde{\mathfrak{m}M}/\mathfrak{m}M)=1$, $$\widetilde{\mathfrak{m}M}=\mathfrak{m}M+Aa\ \text{for some }\ a\not\in \mathfrak{m}M$$
	If $x_1a\in \mathfrak{m}^2M$ then $a\in(\mathfrak{m}^2M:x_1)=\mathfrak{m}M$ and this is a contradiction.\\
	So $\overline{x_1a}\neq0$ in $\widetilde{\mathfrak{m}^2M}/\mathfrak{m}^2M$.\\ Now from the exact sequence (see \ref{RR-2})
	\begin{align}
	0\rt \mathfrak{m}^{2}M:x_1/\mathfrak{m}M\rt \widetilde{\mathfrak{m}M}/\mathfrak{m}M\rt \widetilde{\mathfrak{m}^{2}M}/\mathfrak{m}^{2}M\rt \widetilde{\mathfrak{m}^{2}M_1}/\mathfrak{m}^{2}M_1. 
	\end{align}
	we get $\ell(\widetilde{\mathfrak{m}^2M}/\mathfrak{m}^2M)=\ell(\widetilde{\mathfrak{m}M}/\mathfrak{m}M)=1$. So we have $\overline{ua}=\beta\overline{va}$ where $\beta$ is unit and $u,v$ are $M$-superficial elements. Now we have $(u-\theta v)a\in \mathfrak{m}^2M$ where $\theta\in A$ and $\overline{\theta}=\beta$ is a unit. Since $(u-\theta v)^*=u^*-\beta v^*$ is nonzero element in $H$, so $u-\theta v$ is $M$-superficial. This implies that $$a\in (\mathfrak{m}^2M:(u-\theta v))=\mathfrak{m}M$$
	This is a contradiction. So $\widetilde{\mathfrak{m}M}=\mathfrak{m}M$.\\
	Since $\widetilde{\mathfrak{m}M}=\mathfrak{m}M$, now  from exact sequence (see \ref{RR-2})
	\begin{align}
	0\rt \frac{\mathfrak{m}^{n+1}M:x_1}{\mathfrak{m}^nM}\rt \frac{\widetilde{\mathfrak{m}^nM}}{\mathfrak{m}^nM}\rt \frac{\widetilde{\mathfrak{m}^{n+1}M}}{\mathfrak{m}^{n+1}M}\rt \frac{\widetilde{\mathfrak{m}^{n+1}M_1}}{\mathfrak{m}^{n+1}M_1}, 
	\end{align}
	we get $\widetilde{\mathfrak{m}^iM}=\mathfrak{m}^iM$ for all $i$, because $\widetilde{\mathfrak{m}^iM_1}=\mathfrak{m}^iM_1$ for all $i\geq 2$. This implies that depth$G(M)\geq1.$ Notice that here $\depth{G(M)}=1$ because $\depth{G(M_1)}=0$. In this case $h_M(z) = 3+4z+(1-z)^4$.\\
	Now assume  dim$M\geq5$ and $\underline{x}=x_1,\ldots,x_d$  a maximal $\phi$-superficial sequence. set $M_{d-4}=M/(x_1,\ldots,x_{d-2})M$.  Then we have the following cases.\\
	First case when $G(M_{d-4})$ is \CM. By Sally-descent $G(M)$ is \CM\ and $h_M(z)=4+3z$.\\
	Second case when $\depth{G(M_{d-4})}=3$. By Sally-descent depth$G(M)=d-1$ and
	$h_M(z)=4+2z+z^2$.\\
	Third case when $\depth{G(M_{d-4})}=2$. By Sally-descent $\depth{G(M)}=d-2$ and $h_M(z)=4+z+3z^2-z^3$.\\
	Fourth case when $\depth{G(M_{d-4})}=1$. By Sally-descent $\depth{G(M)}=d-3$ and $h_M(z)=3+4z+(1-z)^4$.

		{\bf Case(5): $e(M)=8$.}\\
	In this case  $M_d\cong Q'/(y^2)\oplus Q'/(y^2)\oplus Q'/(y^2)\oplus Q'/(y^2)$ and $h_{M_d}(z)=4+4z$. So $e(M_d)=\mu(M_d)i(M_d)$. For dim$M\geq 1$,  this equality is preserved modulo any $\phi$-superficial sequence. This implies that $G(M)$ is \CM \ (see \cite[Theorem 2]{PuMCM}).
\end{proof}

From the above theorem we can conclude:
\begin{enumerate}
	\item If $e(M)=4 $ then $a_1=a_2=a_3=a_4=1$. In this case $M$ is an Ulrich module so $G(M)$ is \CM\ and $h_M(z)=4.$
	\item If $e(M)=5$ then $a_1=a_2=a_3=1,a_4=2$. In this case we have two cases:
	\begin{enumerate}
		\item  $G(M)$ is \CM\ if and only if  $h_M(z)=4+z$.
		\item  $\depth{G(M)}=d-1$ if and only if $h_M(z)=4+z^2$. 
	\end{enumerate}
	\item If $e(M)=6$ then $a_1=a_2=1,a_3=a_4=2$. In this case we have three cases:
	\begin{enumerate}
		\item  $G(M)$ is \CM\ if and only if $h_M(z)=4+2z$.
		\item  $\depth{G(M)}=d-1$ if and only if $h_M(z)=4+z+z^2$ or $h_M(z)=4+2z^2$.
		\item  $\depth{G(M)}=d-2$ if and only if $h_M(z)=4+3z^2-z^3$.
	\end{enumerate}
	\item If $e(M)=7$ then $a_1=1,a_2=a_3=a_4=2$. In this case we have four cases:
	\begin{enumerate}
		\item $G(M)$ is \CM\ if and only if $h_M(z)=4+3z$.
		\item $\depth{G(M)}=d-1$ if and only if $h_M(z)=4+2z+z^2$.
		\item $\depth{G(M)}=d-2$ if and only if $h_M(z)=4+z+3z^2-z^4$.
		\item $\depth{G(M)}=d-3$ if and only if $h_M(z)=3+4z+(1-z)^4$.
	\end{enumerate}
	\item If $e(M)=8$ then $a_1=a_2=a_3=a_4=2$. In this case $G(M)$ is \CM\ and $h_M(z)=4+4z.$
\end{enumerate}

\begin{corollary}
	Let $(A,\mathfrak{m})$ be a   hypersurface ring of dimension $d$ with $e(A)=3$. Let $M$ be an MCM module. Now if  $\mu(M)=4$, then $\depth{G(M)}\geq d-3$.
	\end{corollary}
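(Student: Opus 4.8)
The plan is to reduce the statement to the no-free-summand case already proved in the Theorem, by peeling off a free direct summand and using that $G(-)$ is additive on direct sums.

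First I would pass to the situation where $(A,\mathfrak{m})$ is complete with infinite residue field, as in \ref{compl_and_inf}; this changes none of the relevant data, since $\depth G(M)$, $\mu(M)$ and the MCM property are all preserved under the flat base changes of \ref{Base change}. Over a complete local ring the Krull--Schmidt theorem holds, so I can write $M\cong A^{s}\oplus N$ where $N$ is an MCM $A$-module with no free summand and $s\geq0$; comparing minimal generators gives $s+\mu(N)=\mu(M)=4$.

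Next I would use additivity. Since $\mathfrak{m}^{n}(A^{s}\oplus N)=\mathfrak{m}^{n}A^{s}\oplus\mathfrak{m}^{n}N$ for every $n$, we have $G(M)\cong G(A)^{s}\oplus G(N)$ as graded $G(A)$-modules, and hence $\depth G(M)=\min\{\depth G(A)^{s},\depth G(N)\}$, with the convention that a vanishing summand is discarded. Because $e(A)=3$ forces $h_{A}(z)=1+z+z^{2}$, we have $\red(A)=2$, so $G(A)$ is \CM\ of dimension $d$ by \cite[Theorem 2.1]{S}; consequently $\depth G(A)^{s}=d$ whenever $s\geq1$. Thus the corollary reduces to showing $\depth G(N)\geq d-3$, which I would establish by cases on $\mu(N)\in\{0,1,2,3,4\}$.

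If $\mu(N)=0$ then $N=0$ and $M\cong A^{4}$ is free, so $G(M)\cong G(A)^{4}$ is \CM. If $\mu(N)=4$, then $N$ has no free summand and the Theorem applies verbatim to give $\depth G(N)\geq d-3$. If $\mu(N)\in\{2,3\}$, the corresponding cases of \cite{Mishra} give $\depth G(N)\geq d-\mu(N)+1\geq d-2$. Finally, if $\mu(N)=1$ then $N$ is cyclic, so its minimal $Q$-presentation is $0\rt Q\xrightarrow{(f)}Q\rt N\rt0$ with $N\cong Q/(f)$; the associated matrix factorization $fh=g$ together with the absence of a free summand forces $f,h\in\mathfrak{n}$ and $v_{Q}(f)+v_{Q}(h)=3$, whence $e(N)=v_{Q}(f)\in\{1,2\}$. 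In either case $N$ is a hypersurface ring of minimal multiplicity, so $G(N)$ is \CM. Collecting the cases yields $\depth G(N)\geq d-3$, and with the displayed minimum formula this gives $\depth G(M)\geq d-3$. The mathematical content lives entirely in the Theorem; what needs care here is purely organizational. The two points I would watch are justifying the depth-of-a-direct-sum formula together with the completeness reduction that legitimizes the Krull--Schmidt splitting, and handling the border value $\mu(N)=1$, which is not covered by the cited results for $\mu\in\{2,3\}$ and instead requires the direct observation that a cyclic MCM module with no free summand over this ring has multiplicity at most two.
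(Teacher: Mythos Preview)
Your proof is correct and follows essentially the same approach as the paper: split off a maximal free summand, use that $G(A)$ is \CM, and handle the remaining summand $N$ case by case on $\mu(N)$ via the Theorem and the results of \cite{Mishra}. The only cosmetic difference is in the case $\mu(N)=1$, where the paper simply notes $N\cong Q/(a)$ and hence $G(N)$ is \CM, while you take a slightly longer detour through the matrix factorization and minimal multiplicity.
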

\begin{proof}
	We may assume $A$ is complete with infinite residue field (see \ref{compl_and_inf}).
	Since $e(A)=3$,  we can take $(A,\mathfrak{m})=(Q/(g),\mathfrak{n}/(g))$ where $(Q,\mathfrak{n})$ is a regular local ring of dimension $d+1$ and $g\in \mathfrak{n}^3\setminus\mathfrak{n}^4$.\\
	Now we have two cases here.\\
	First case when $M$ has no free summand. In this case, from the above theorem $\depth{G(M)}\geq d-3$.\\
	Next case when $M$ has  free summand. In this case we can write $M\cong N\oplus A^s$ for some $s\geq 1$ and $N$ has no free summand. We assume $N\ne 0$, otherwise  $M$ is free and  $G(M)$ is \CM.\\
	Clearly, $N$ is a MCM $A$-module (see \cite[Proposition 1.2.9]{BH}). Notice that red$(N)\leq 2$, because red$(M)\leq 2$. Also $\mu(M)> \mu(N)$, so $\mu(N)\leq 3$.\\ 
	If $\mu(N)=1$ then we have a minimal presentation of $N$ as $0\rt Q\xrightarrow{a} Q\rt N\rt  0$, where $a\in \mathfrak{n}$. 
	This implies $N\cong Q/(a)Q$. So, $G(N)$ is \CM.\\
	If $\mu(N)=2$ then $\depth{G(N)}\geq d-1$ (from \cite[Theorem 1.4]{Mishra}).\\	
	If $\mu(N)=3$ then  $\depth{G(N)}\geq d-2$ (from \cite[Theorem 1.5]{Mishra}).\\ 
	We know that (see \cite[Proposition 1.2.9]{BH})
	
	depth$G(M)\geq $ min\{$\depth{G(N)}$, $\depth{G(A)}$\}$=$ $\depth{G(N)}$.\\
	So in this case $\depth{G(M)}\geq d-2$. 
	
\end{proof}

\section{Examples} 
We give examples of MCM modules $M$ with no free summand and satisfying $\mu(M)=4$, $\dim M=3$ and $\depth G(M)=0,1,2,3$.

Take $Q=k[[x,y,z,t]]$, $\mathfrak{n}=(x,y,z,t)$ 
\begin{enumerate}
	\item   $\phi= \begin{pmatrix}
	x & y & z & t\\
	x^2 & x^2 & 0 &0\\
	0 & 0 & x^2 & 0\\
	0 & 0 & 0  & x^2
	\end{pmatrix} $
	then
	$\depth{G(M)}=0$ because $\overline{e_1}$ where $e_1=(1,0,0,0)^T$ is an element of $\widetilde{\mathfrak{n}M}\setminus \mathfrak{n}M$. We have $x\overline{e_1}=-x^2\overline{e_2}$, $y\overline{e_1}=-x^2\overline{e_2}$, $z\overline{e_1}=-x^2\overline{e_3}$ and $t\overline{e_1}=-x^2\overline{e_4}$. These relations imply that  $x^2(x-y)\overline{e_i}=0$ for $i=1,2,3,4$. So,  $M$ is $Q/(x^2(x-y))$-module.  
	
	\item  $\phi = \begin{pmatrix}
	x & y & z & 0\\
	x^2 & x^2 & 0 &0\\
	0 & 0 & x^2 & 0\\
	0 & 0 & 0  & x^2
	\end{pmatrix} $ 
	then $\depth{G(M)}=1$.\\ Since $t^*$ is $G(M)-$regular and  after going modulo $t^*$, we get $\depth{G({N})}=0$, here $N=M/tM$.  Notice that $\overline{e_1}\in \widetilde{\mathfrak{n}N}\setminus\mathfrak{n}N$. Since $x^2(x-y)\overline{e_i}=0$ for $i=1,2,3,4$, this implies $M$ is $Q/(x^2(x-y))$-module.
	\item  $\phi= \begin{pmatrix}
	x   & y   & 0   & 0\\
	x^2 & x^2 & 0   &0\\
	0   & 0   & x^2 & 0\\
	0   & 0   & 0   & x^2
	\end{pmatrix} $ then it is clear that $z^*,t^*$ is maximal $G(M)$-regular sequence. So, $\depth{G(M)}=2$. In fact, if we set $N=M/(z,t)M$ then $\overline{e_1}\in \widetilde{\mathfrak{n}N}\setminus\mathfrak{n}N$. Also notice that $M$ is $Q/(x^2(x-y))$-module, because $x^2(x-y)\overline{e_i}=0$ for $i=1,2,3,4.$
	\item  $\phi= \begin{pmatrix}
	x & 0 & 0 & 0\\
	0 & x^2 & 0 &0\\
	0 & 0 & x^2 & 0\\
	0 & 0 & 0  & x^2
	\end{pmatrix} $ then  $G(M)$ is \CM.
	
\end{enumerate}

\end{document}